\documentclass[12pt,twoside,singlespace]{article}
\pagestyle{plain}

\usepackage{array, paralist, enumerate, amsmath, amsthm, amsfonts, amssymb, color, mathrsfs,comment}

\usepackage{geometry}
\usepackage{framed}
\usepackage{hyperref}
\usepackage{graphicx}
\usepackage{epstopdf}

\usepackage{tikz}
\usepackage{tkz-graph}
\usetikzlibrary{arrows,%
                shapes,positioning}

\definecolor{DarkBlue}{rgb}{0,0,0.8} 
\definecolor{DarkGreen}{rgb}{0,0.5,0.0} 
\definecolor{DarkRed}{rgb}{0.9,0.0,0.0} 
\tikzset{
    >=stealth',
    punkt/.style={
           rectangle,
           rounded corners,
           draw=black, very thick,
           text width=6.5em,
           minimum height=2em,
           text centered},
    down/.style={
           regular polygon,
           regular polygon sides=3,
           draw,
           minimum size=0.5cm},
    up/.style={
           regular polygon,
           regular polygon sides=3,
           draw,
           regular polygon rotate=60,
           minimum size=0.5cm},
    all/.style={
           regular polygon,
           regular polygon sides=4,
           draw,
           minimum size=0.7cm},
    none/.style={
           circle,
           draw,
           minimum size=0.5cm},
    simple/.style={
           circle,
           draw,
           fill=black,
           inner sep=0pt,
           minimum width=4pt},
    pil/.style={
           ->,
           thick,
           shorten <=2pt,
           shorten >=2pt,}
}

\usepackage[T1]{fontenc}
\usepackage[latin1]{inputenc}

\numberwithin{equation}{section}
\newtheorem{thm}[equation]{Theorem}
\newtheorem{lem}[equation]{Lemma}

\newtheorem{prop}[equation]{Proposition}

\theoremstyle{definition}

\newtheorem{alg}{Algorithm}

\newcommand{\ZZ}{\mathbf{Z}}

\newcommand{\RR}{\mathbf{R}}

\geometry{verbose,letterpaper,tmargin=1in}







\author{Yan X Zhang \\ Department of Mathematics, UC Berkeley\\Berkeley, CA, USA}
\title{Four Variations on Graded Posets}

\begin{document}

\pagestyle{plain}

\maketitle

\maketitle
\begin{abstract}
We explore the enumeration of some natural classes of graded posets, including all graded posets, $(2+2)$- and $(3+1)$-avoiding graded posets, $(2+2)$-avoiding graded posets, and $(3+1$)-avoiding graded posets. We obtain enumerative and structural theorems for all of them. Along the way, we discuss a situation when we can switch between enumeration of labeled and unlabeled objects with ease, generalize a result of Postnikov and Stanley from the theory of hyperplane arrangements, answer a question posed by Stanley, and see an old result of Klarner in a new light.
\end{abstract}

\section{Introduction}
\label{sec:introduction}

The enumeration of posets has been a classical part of enumerative combinatorics; \emph{graded} posets form a very natural subfamily of posets. $(2+2)$-avoidance and $(3+1)$-avoidance are two types of poset-avoidance frequently encountered in literature. Combining these two types of avoidance gives $4$ natural families of graded posets:

\begin{compactitem}
\item All graded posets;
\item $(2+2)$- and $(3+1)$-avoiding graded posets.
\item $(2+2)$-avoiding graded posets;
\item $(3+1)$-avoiding graded posets.
\end{compactitem}

Our paper summarizes some enumerative results on these four families. After some preliminaries in Section~\ref{sec:preliminaries}, we enumerate all graded posets in Section~\ref{sec:all-graded} in a way that better packages old results of Klarner~\cite{klarner-graded} and Kreweras~\cite{kreweras}. We also set the stage for the other families: graded posets, thanks to their grading, are amenable to the transfer-matrix method. 

Then, we take a detour in Section~\ref{sec:seeds} and explore a question of Stanley, namely to understand the phenomenon that in some poset-counting situations there is a very simple substitution that obtains the ordinary generating function (of unlabeled posets) from the exponential generating function (of labeled posets) and vice-versa. The main result in this Section, Theorem~\ref{thm:seeds}, describes a class of posets (in fact, the notion generalizes quite intuitively to other combinatorial objects) where this phenomenon occurs, namely that the automorphism group of specific members of the class must be trivial, which is guaranteed for us when we study $(2+2)$-avoiding posets. Some other ramifications of these results are discussed at the end of Section~\ref{sec:seeds}, including a generalization of an observation in work of Stanley and Postnikov on hyperplane arrangements \cite{postnikov-stanley}, and Yangzhou Hu's \cite{hu} direct application of our theorem (from when our work was in progress). We also generalize this discussion in Appendix~\ref{app:seeds} with Polya theory.

Finally, we go back to counting posets in Sections~\ref{sec:(3+1)+(2+2)}, \ref{sec:(2+2)}, and \ref{sec:(3+1)}. The work in the first two of these sections is new; furthermore, these sections deal with $(2+2)$-avoiding posets, so Section~\ref{sec:seeds} applies. Section~\ref{sec:(3+1)} is a brief summary of our standalone work of enumerating $(3+1)$-avoiding graded posets joint with Lewis \cite{lewis-zhang} and serves as a nice complement to our other results. A particularly important technique that runs in the background of these sections is Lemma~\ref{lem:locality}, which takes a potentially ``global'' condition of chain-avoidance and shows that it suffices to check ``local'' conditions in our poset. We end with some ideas for future work in Section~\ref{sec:conclusion}.

\section{Preliminaries}
\label{sec:preliminaries}

\subsection{Generating Functions}

When we count combinatorial structures with a concept of ``size,'' such as posets (where a choice of a ``size function'' would be the number of vertices), we frequently consider using a generating function to count the said structure for every size simultaneously. Two types of generating functions usually occur:
\begin{compactitem}
\item the OGF (\emph{ordinary generating function}), usually for unlabeled structures. If there are $o_n$ objects with size $n$, the OGF is 
\[O(x) = \sum_n o_n x^n.\]
\item the EGF (\emph{exponential generating function}), usually for structures labeled by $[n]$ (defined to be \{1, \ldots, n\}). If there are $e_n$ objects with size $n$, the EGF is 
\[E(x) = \sum_n e_n \frac{x^n}{n!}.\]
\end{compactitem}

\begin{table}
\begin{center}
\begin{tabular}{r|c|c}
 & OGF & EGF \\
\hline 
$n$-element sets (antichains) & $\frac{1}{1-x}$ & $e^x$ \\
\hline 
$n$-element lists (chains) & $\frac{1}{1-x}$ & $\frac{1}{1-x}$ \\
\hline
\end{tabular}
\caption{The OGF and EGF for two simple classes of posets.}
\label{table:OGF-EGF}
\end{center}
\end{table}

OGFs and EGFs together tell the story of the symmetries of the combinatorial family and carry different information. An example of two extreme cases, with the same OGF but different EGFs, can be seen in Table~\ref{table:OGF-EGF}.

\subsection{Posets and Poset Avoidance}
In this work, all posets we care about are finite. We represent (and refer to) all such posets by their Hasse diagrams, so we use the standard language of graphs. In particular, for a poset $P$, we use $V(P)$ to denote the vertices and $E(P)$ to denote its edges.

We say a poset $P$ \emph{contains} (\emph{avoids}) another poset $Q$ if we can (cannot) select some vertices $S \subset V(P)$ and some edges of $P$ such that they form an isomorphic poset to $Q$. We are only going to be interested in the case where $Q$ is of the form $(a_1 + a_2 + \cdots + a_k)$, which we use to mean the union of $k$ incomparable chains of lengths $a_1, \ldots, a_k$; in fact, we will basically only see $(2+2)$ or $(3+1)$. A $(2+2)$-avoiding poset is also called an \emph{interval order}, and a $(2+2)$- and $(3+1)$-avoiding poset is called an \emph{semiorder}. Examples are given in Table~\ref{table:avoidance}. A standard reference on posets is Stanley \cite{stanley-ec1}, and a relationship between $(2+2)$-avoidance and the words ``interval order'' can be found in \cite{bogart_2+2}.

\begin{table}
\begin{center}
\begin{tabular}{rccc}
&
\begin{tikzpicture}[scale=0.1]
  \node[simple] (a) at (0,0) {};
  \node[simple] (b) at (0, 30) {};
  \node[simple] (c) at (-10, 10) {};
  \node[simple] (d) at (-10, 20) {};
  \node[simple] (e) at (10, 15) {};
  \path {
    (a) edge (c)
        edge (e)
    (c) edge (d)
    (b) edge (d)
        edge (e)
    }; 
\end{tikzpicture} &
\begin{tikzpicture}[scale=0.1]
  \node[simple, style=red] (b) at (-10, 15) {};
  \node[simple, style=red] (c) at (10, 15) {};
  \node[simple, style=red] (d) at (-10, 30) {};
  \node[simple, style=red] (e) at (10, 30) {};
  \node[simple] (a) at (0, 0) {};
  \path {
    (a) edge (b)
        edge (c)
        };
  \path [style=very thick, red]{
    (b) edge (d)
    (c) edge (e)
    }; 
\end{tikzpicture}
 &
\begin{tikzpicture}[scale=0.1]
  \node[simple] (a) at (-10,0) {};
  \node[simple, style=red] (z) at (10,0) {};
  \node[simple, style=red] (b) at (-10, 15) {};
  \node[simple, style=red] (c) at (10, 15) {};
  \node[simple] (d) at (-10, 30) {};
  \node[simple, style=red] (e) at (10, 30) {};
  \path {
    (a) edge (b)
        edge (c)
    (b) edge (d)
    (d) edge (c)
    (c) edge (e)
    (z) edge (c)
    }; 
  \path [style=very thick, red]{
    (z) edge (c)
    (c) edge (e)
    }; 
\end{tikzpicture} \\
\hline \\
avoids $(2+2)$ & yes & no & yes \\
avoids $(3+1)$ & yes & yes & no \\
\end{tabular}
\caption{Examples of poset avoidance and different notions of ``graded.'' From left to right, we have a poset which is not graded, a weakly graded poset that is not strongly graded, and a strongly graded poset. We then show which ones avoid $(2+2)$ and $(3+1)$.}
\label{table:avoidance}
\end{center}
\end{table}

\section{All Graded Posets}
\label{sec:all-graded}

There is some ambiguity in the literature about the usage of the word ``graded,'' so it is important that we start by being precise. In this work, we define a poset $P$ to be \emph{weakly graded (of rank $M$)} if it is equipped with a \emph{rank function} $r\colon V(P) \to \ZZ$ such that if $x$ covers $y$ then $r(x) - r(y) = 1$. It is implied that the rank function is actually an equivalence class of rank functions where we consider two such functions to be equivalent up to translating all the values by a constant. Furthermore, we define a poset to be \emph{strongly graded}, or simply \emph{graded} (this choice is consistent with, say, Stanley~\cite{stanley-ec1}), if all maximal chains have the same length. Recall Table~\ref{table:avoidance} for a comparison.

For our work, we mostly care about graded posets, as working with them tends to be a bit easier. Often, weakly graded posets can be enumerated in basically the same manner, with some manual tweaking. For graded posets, we can consider all minimal vertices to have rank $0$ and all maximal vertices to have the same rank $M$; we call $M+1$ (the number of ranks) the \emph{height} of the poset. we use the notation $P(0), P(1), \ldots, P(M)$ to denote the sets of vertices at levels $0, 1, \ldots, M$. Define an \emph{edge-level} to be a height-$2$ graded poset; note a graded poset of height $M$ has $M-1$ edge-levels as induced subgraphs of adjacent ranked vertex sets.

Klarner \cite{klarner-graded} gave an enumeration of weakly graded posets (which he called \emph{graded}) and Kreweras \cite{kreweras} gave a similar enumeration of graded posets (which he called \emph{tiered}\footnote{As promised, there is indeed a lot of ambiguity in the literature!}). Both results were presented in form of nested sums of the type
\[
\sum f(n_1, n_2) f(n_2, n_3) \cdots f(n_{i-1}, n_i) g(n_i),
\]
where $f(m,n)$ and $g(n)$ are specific functions and the sum, as noted by Klarner \cite{klarner-graded} himself, ``extends over all compositions $(n_1, \cdots, n_i)$ of $n$ into an unrestricted number of positive parts.'' Our main result in this section, Theorem~\ref{thm:all-graded}, is the most natural reformulation of these results as these sums occur naturally from matrix multiplication. Even though the two forms are mathematically equivalent, we gain two things from this reframing: one, we move the computation into the more flexible territory of linear algebra; two, we can apply the ``transfer-matrix state of mind'' to other classes of graded posets, which is exactly what we do in the remaining sections.

Before we start, we make a simple but very important observation: the subposet induced by two adjacent ranks of a weakly graded poset is just a bipartite graph. Thus, it makes sense to consider the EGF of $\psi_w(m,n) = 2^{mn}$, which counts bipartite graphs with $m$ vertices on bottom and $n$ on top:
\[
\Psi_w(x, y) = \sum_{m,n \geq 0} \psi_w(m,n)  \frac{x^my^n}{m!n!}= \sum_{m,n \geq 0} \frac{2^{mn} x^my^n}{m!n!}.
\]

Similarly, the subposet induced by two adjacent ranks of a graded poset is just a bipartite graph with no isolated vertices. We can count this by inclusion-exclusion on isolated vertices. The EGF enumerating these structures is 
\begin{align*}
\Psi_s(x,y) & = \sum_{m,n \geq 0} \psi_s(m,n) \frac{x^my^n}{m!n!} \\
& = \sum_{m,n} (2^{mn} - {m \choose 1} 2^{(m-1)n} - {n \choose 1} 2^{m(n-1)} + \cdots) \frac{x^my^n}{m!n!} \\
& = \sum_{m,n} \sum_{j=0}^m \sum_{k=0}^n (-1)^{m-j+n-k} 2^{jk} {m \choose j} {n \choose k} \frac{x^my^n}{m!n!} \\
& = \sum_{j,k} 2^{jk} \sum_{m \geq j} \sum_{n \geq k} (-1)^{m-j}(-1)^{n-k} {m \choose j} {n \choose k} \frac{x^my^n}{m!n!} \\
& = \sum_{j,k} 2^{jk} (\frac{x^j}{j!} - \frac{x^{j+1}}{j!1!} + \frac{x^{j+2}}{j!2!} - \cdots) (\frac{y^k}{k!} - \frac{y^{k+1}}{k!1!} + \frac{y^{k+2}}{k!2!} - \cdots) \\
& = \sum_{j,k} 2^{jk} \frac{x^j}{j!} (e^{-x}) \frac{y^k}{k!} (e^{-y}) \\
& = e^{-x-y}\Psi_w(x,y),
\end{align*}
where we used inclusion-exclusion in the second step. $\Psi_w$ will resurface when we talk about $(3+1)$-graded posets in Section~\ref{sec:(3+1)}. With the concession of treating the $\psi_s$'s as ``simple,'' we obtain the following: 
\begin{thm}
\label{thm:all-graded}
For the enumeration of all graded posets, the following hold:
\begin{compactitem}
\item Take $r \geq 1$ an integer. Let $V_r = \begin{bmatrix}\sqrt{x^1/1!} & \sqrt{x^2/2!} & \cdots & \sqrt{x^r/r!} \end{bmatrix}^T$ be an $r \times 1$ matrix. Let $D_r$ be the $r \times r$ diagonal matrix with diagonal entries matching those of $V_r$. Let $A_r = \{A_{ij}\}_{i,j}$ be the $r \times r$ matrix where $A_{ij} = \psi_s(i,j)$ for all $i, j$. Then the EGF for $p_{n,r, k}$, the number of graded posets of size $n$, height $k$, and at most $r$ vertices on each rank, is
\[
P_{r,k}(x) = \sum_n p_{n,r,k} x^n/n! = V_r^T(D_rA_rD_r)^{k-1}V_r.
\]
The EGF for graded posets of size $n$ and at most $r$ vertices on each rank is
\[
P_r(x) = \sum_n p_{n,r} x^n/n! = V_r^T (I_r-D_rA_rD_r)^{-1} V_r.
\]
\item Let $V = \begin{bmatrix}\sqrt{x^1/1!} & \sqrt{x^2/2!} & \cdots & \sqrt{x^r/r!} & \cdots \end{bmatrix}^T$ be the infinite column matrix limit of $V_r$. Similarly, let $D$ and $A$ be infinite matrices defined as the limits of $D_r$ and $A_r$. Then, the exponential generating function for the number of graded posets on $n$ vertices is:
\[
P(x) = V^T (I-DAD)^{-1} V.
\]
\end{compactitem} 
\end{thm}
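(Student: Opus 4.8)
The plan is to realize every labeled graded poset as a stack of independently chosen edge-levels, translate this decomposition into a sum over compositions, recognize that sum as the advertised matrix product via the square-root bookkeeping, and then sum over all heights and pass to the infinite limit to obtain $P(x)$.

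First I would set up the structural bijection. A graded poset of height $k$ has ranks $P(0),\ldots,P(k-1)$, all nonempty, and its Hasse diagram consists precisely of the $k-1$ edge-levels sitting between consecutive ranks. This data can be specified freely: choose the ordered sequence of ranks (a set composition of $[n]$ into $k$ nonempty blocks) and then, independently, an edge-level on each adjacent pair, where an edge-level is a bipartite graph with no isolated vertices, exactly what $\psi_s$ counts. The point to verify is that this purely local condition reproduces global gradedness: since no vertex in an interior rank is isolated in either the edge-level below it or the one above it, every maximal chain extends one rank at a time from rank $0$ to rank $k-1$, so all maximal chains have length $k-1$; moreover the minimal and maximal elements are exactly ranks $0$ and $k-1$, and every drawn edge is a genuine cover because its endpoints differ by one rank. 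Hence the decomposition is a bijection.

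Next I would pass to generating functions. Counting the label assignments with a multinomial and the edge-levels with $\psi_s$, the number of height-$k$ graded posets on $[n]$ is $\sum \binom{n}{i_1,\ldots,i_k}\prod_{t=1}^{k-1}\psi_s(i_t,i_{t+1})$, summed over compositions $i_1+\cdots+i_k=n$ with each $i_t\ge 1$. Dividing by $n!$ turns the multinomial into $\prod_t 1/i_t!$, so the height-$k$ EGF is $P_k(x)=\sum_{i_1,\ldots,i_k\ge 1}\prod_t (x^{i_t}/i_t!)\prod_t \psi_s(i_t,i_{t+1})$. I would then expand $V^T(DAD)^{k-1}V$ by summing over the index path $i_1,\ldots,i_k$ and observe the role of the square roots: each interior index $i_t$ appears in two consecutive factors of $DAD$, and each endpoint index appears once in $V$ and once in the adjacent $DAD$, so in every case the two half-powers $\sqrt{x^{i_t}/i_t!}$ combine into the full weight $x^{i_t}/i_t!$, while the off-diagonal data contribute exactly $\prod_t \psi_s(i_t,i_{t+1})$. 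This matches $P_k(x)$ term by term, establishing the first bullet.

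Finally I would sum over heights and take the limit: $P(x)=\sum_{k\ge 1}P_k(x)=V^T\bigl(\sum_{j\ge 0}(DAD)^j\bigr)V=V^T(I-DAD)^{-1}V$. The step needing care, and the main obstacle, is making this rigorous as a formal power series over infinite matrices whose entries carry half-integer powers of $x$. I would resolve it by a finiteness argument: in $V^T(DAD)^jV$ the total $x$-degree is at least $j+1$, since each of the $j+1$ indices is at least $1$, so the coefficient of $x^n$ receives contributions only from $j\le n-1$ and only from indices $\le n$. Consequently the inverse $(I-DAD)^{-1}$ is well-defined coefficientwise, the half-integer powers always pair into honest integer powers, and truncating to ranks of size at most $r\ge n$ recovers all graded posets on $n$ vertices, giving $P_r(x)\to P(x)$. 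This simultaneously justifies the $(I-D_rA_rD_r)^{-1}$ formula of the first bullet and its $r\to\infty$ limit in the second.
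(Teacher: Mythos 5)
Your proposal is correct and follows essentially the same route as the paper: decompose a height-$k$ graded poset into $k-1$ independently chosen edge-levels counted by $\psi_s$, express the resulting composition sum as $V_r^T(D_rA_rD_r)^{k-1}V_r$ via the square-root symmetrization, sum the geometric series over heights, and justify the infinite limit by noting that the coefficient of $x^n$ only involves finitely many rows and columns. Your added verification that the local no-isolated-vertex condition implies global gradedness is a detail the paper leaves implicit, but it does not change the argument.
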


\begin{proof}
We can think of a graded poset with height $n$ as $(n-1)$ edge-levels (height-$2$ graded posets) glued together at their vertices. There are $A_{ij} = \psi_s(i,j)$ possible edge-levels with $(i+j)$ vertices (we use this notation to mean there are $i$ vertices on the bottom and $j$ on top), so we can keep track of how the slices fit together via the transfer-matrix method (see Stanley~\cite{stanley-ec1} for an overview of this technique).

Consider the exponential generating function $P_{r,k}(x)$ for labeled weakly graded posets of height $k+1$, with no more than $r$ vertices on each rank. As stated, these objects are exactly $k$ edge-levels glued together at their vertices. An edge-level with $(i+j)$ vertices can be glued on the bottom of any edge-level with $(j+k)$ vertices. 

Now, consider a complete graph (with all loops) $G$ with $r$ vertices, labeled $1$ through $r$, with edge weight $\psi_s(i,j)$ on each edge between $i$ and $j$. Now, fix a parition of $a_0 + a_1 + \cdots + a_k$ labelled vertices. The product of edge weights in $G$ on the path of length $k$ of form $(a_0, \ldots, a_k)$ in $G$ is exactly the number of ways to draw the edges of a graded poset $P$ of height $k+1$ having $|P(i)| = a_i$ for each rank $i$. Thus, if we let $A'_r = \{A_{ij} \frac{x^i}{i!}\}_{ij}$ be a $r \times r$ matrix, we have

\begin{align*}
P_{r,k+1}(x) & = \sum_{c_1, \ldots, c_{k+1}} {c_1 + \cdots + c_{k+1} \choose c_1, c_2, \ldots, c_{k+1}}A_{c_1, c_2} \cdots A_{c_k, c_{k+1}} \frac{x^{c_1 + c_2 + \cdots + c_{k+1}}}{(c_1 + c_2 + \cdots c_{k+1})!} \\
& = \sum_{c_1 = 1}^r \frac{x^{c_1}}{c_1!} \sum_{c_2, \ldots, c_{k+1}} \frac{A_{c_1, c_2}x^{c_2}}{c_2!} \cdots \frac{A_{c_{k}, c_{k+1}}x^{c_{k+1}}}{c_{k+1}!} \\
& = \begin{bmatrix}x^1/1! & x^2/2! & \cdots & x^r/r! \end{bmatrix} \cdot 
(A'_r)^k
 \cdot 
\begin{bmatrix}1 \\ 1 \\ 1 \\ \vdots \end{bmatrix}.
\end{align*}
Here, the purpose of the $\frac{x^i}{i!}$ terms is to account for the labeling of the vertices partitioned by rank. 

We can make this expression more symmetric: let $V_r = \begin{bmatrix}\sqrt{x^1/1!} \\ \sqrt{x^2/2!} \\ \cdots \\ \sqrt{x^r/r!} \end{bmatrix}$. Also, let
\[
D_r = \begin{bmatrix}\sqrt{x^1/1!} & 0 & \cdots & 0 \\ 0 & \sqrt{x^2/2!} & \cdots & 0 \\ 
\vdots & \vdots & \ddots & \vdots \\ 
0 & 0 & \cdots & \sqrt{x^r/r!} \end{bmatrix}.
\]
If we define $A_r$ to be the $r \times r$ matrix with $(i,j)$ term $A_{ij}$, then $A'_r = A_rD_r^2$. This means that
\begin{align*}
P_{r,k}(x) & = V_r^TD_r(A_rD_r^2)^k \cdot \begin{bmatrix}1 \\ 1 \\ 1 \\ 1 \end{bmatrix} \\
& = V_r^T(D_rA_rD_r)^k D_r \cdot \begin{bmatrix}1 \\ 1 \\ 1 \\ 1 \end{bmatrix} \\
& = V_r^T(D_rA_rD_r)^kV_r.
\end{align*}

Now, summing over all $k$ gives $P_r(x)$, the generating function for all weakly graded posets with at most $r$ vertices on each rank:
\[
P_r(x) = V_r^T (I_r-D_rA_rD_r)^{-1} V_r.
\]

Not only can this be computed explicitly for any $r$, the limit of all of these matrices as $r \to \infty$ are also well-defined as formal-power-series-valued matrices: if we want the $x^n$ term, we can ignore any rows and columns beyond a certain point since all values afterwards will be divisible by $x^n$. Letting $P_r \rightarrow P$, $A_r \rightarrow A$, etc. gives our desired answer for infinite matrices.
\end{proof}

We remark that this method is quite general. In fact:

\begin{thm}
\label{thm:all-weakly graded}
Theorem \ref{thm:all-graded} holds for weakly graded posets if we replace $\psi_s(i,j)$ by $\psi_w(i,j) = 2^{ij}$.
\end{thm}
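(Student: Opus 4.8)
The plan is to observe that the proof of Theorem~\ref{thm:all-graded} never uses any property of the edge-level count $\psi_s(i,j)$ beyond the bare fact that it enumerates the possible edge-levels having $i$ vertices on the bottom and $j$ on top. Consequently the entire argument transfers once we feed it the correct edge-level count for the weakly graded setting. First I would recall the observation established in the preliminaries: the subposet induced by two adjacent ranks of a \emph{weakly} graded poset is an arbitrary bipartite graph, with isolated vertices now permitted, and that these are counted by $\psi_w(i,j) = 2^{ij}$. Thus the only change to the setup is to redefine $A_r$ so that its $(i,j)$ entry is $\psi_w(i,j)$ rather than $\psi_s(i,j)$, leaving $V_r$ and $D_r$ untouched.

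The one combinatorial point that genuinely requires checking, and which I regard as the crux, is that the decomposition of a weakly graded poset into its edge-levels remains a bijection in this more permissive setting. Concretely, I would verify that sending a weakly graded poset of height $k$ to the ordered list of its $k-1$ edge-levels, namely the induced bipartite graphs on consecutive ranks, is a bijection onto sequences of edge-levels whose consecutive members are glued along a common rank. The forward direction is immediate, since every cover relation joins two adjacent ranks and hence lies in exactly one edge-level. For the reverse direction I would argue that gluing an arbitrary family of bipartite edge-levels along their shared ranks and taking the transitive closure always produces a valid weakly graded poset: because the drawn edges only connect adjacent ranks, any chain between ranks $i$ and $i+2$ must pass through rank $i+1$, so no new cover relation is created by transitivity. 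Hence the Hasse diagram is exactly the union of the edge-levels and the given rank function satisfies the defining covering condition. The essential difference from the strongly graded case is that an isolated vertex in an edge-level is now allowed, which is precisely what the passage from $\psi_s$ to $\psi_w$ records.

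With the bijection in hand, the remainder is identical to the proof of Theorem~\ref{thm:all-graded}. Fixing rank sizes $a_0, \ldots, a_{k-1}$, the number of labeled weakly graded posets with these rank sizes is $\prod_i \psi_w(a_i, a_{i+1})$, the multinomial coefficient accounts for the labelling distributed across the ranks, and the same manipulation rewrites the generating function as a product of transfer matrices. This yields
\[
P_{r,k}(x) = V_r^T (D_r A_r D_r)^{k-1} V_r, \qquad P_r(x) = V_r^T (I_r - D_r A_r D_r)^{-1} V_r,
\]
now with $A_r$ built from $\psi_w$, and the same truncation argument as $r \to \infty$ gives the infinite-matrix statement. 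I expect no further obstacles here, as the linear-algebraic bookkeeping is insensitive to which nonnegative integers populate $A_r$.
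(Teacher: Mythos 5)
Your proposal is correct and matches the paper's argument, which likewise observes that the only change needed is to replace the edge-level count by $2^{ij}$ since a weakly graded edge-level is an arbitrary subgraph of $K_{i,j}$. Your additional verification that gluing arbitrary bipartite edge-levels always yields a valid weakly graded poset is a detail the paper leaves implicit, but it is the same decomposition and the same transfer-matrix bookkeeping.
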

\begin{proof}
All steps of the proof are identical, except we need to count the number of edge-levels for weakly graded posets. A weakly graded poset of $(i+j)$ vertices is exactly a subgraph of the complete bipartite graph $K_{i,j}$, so we have exactly $2^{ij}$ choices instead of $\psi_s(i,j)$.
\end{proof}

\section{Seeds and Gardens}
\label{sec:seeds}
As an interlude, we examine four results on enumerating interval orders and semiorders, all by different authors:

\begin{compactitem}
\item (Wine and Freund \cite{wine-freund}) OGF of semiorders: $\frac{1-\sqrt{1-4x}}{2x}.$
\item (Stanley \cite{stanley-arrangements}) EGF of semiorders: $\frac{1-\sqrt{4e^{-x} - 3}}{2(1-e^{-x})}$.
\item (Zagier \cite{zagier}) EGF of interval orders: $\sum_n \prod_{k=1}^n(1-e^{-kx})$.
\item (Bousquet-Melou et al. \cite{bousquet-melou}) OGF of interval orders: $\sum_n \prod_{k=1}^n (1-(1-x)^k)$. 
\end{compactitem}

Note that for both interval orders and semiorders, we have $E(x) = O(1-e^{-x}).$ Our work was motivated by Stanley's question (personal communication) of the unifying reason behind this pattern, which clearly does not hold for arbitrary combinatorial structures (recall Table~\ref{table:OGF-EGF})! 

First, we define some notions that capture the concept of ``cloning'' vertices in a poset; these notions are generalizable to other combinatorial structures, though we focus our statements to posets for clarity in this paper. Given a poset $P$, call two incomparable vertices $x$ and $x'$ in $V(P)$ \emph{exchangeable} if for all $y \in V(P)$, $x < y$ (resp. $x > y$) if and only if $x' < y$ (resp. $x' > y$). Given a poset $P$ and a vertex $x \in V(P)$, we denote by \emph{cloning at $x$} the process that outputs a poset $P'$ with an additional vertex $x'$ exchangeable with $x$. We denote by \emph{decloning at $x$} the process that outputs a poset $P'$ with $x$ removed, given that $x$ was exchangeable with at least one other vertex $x' \in V(P)$. An example of cloning can be seen in Figure~\ref{fig:cloning}. 

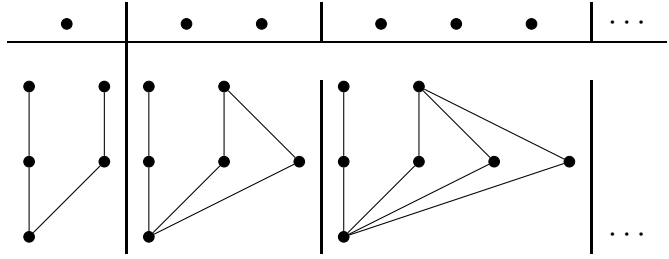
\begin{figure}

\begin{center}
\begin{tabular}{c|c|c|c}
\begin{tikzpicture}[scale=0.1]
  \node[simple] (b) at (10,0) {};
\end{tikzpicture} &
\begin{tikzpicture}[scale=0.1]
  \node[simple] (b) at (10, 0) {};
  \node[simple] (bb) at (20, 0) {};
\end{tikzpicture}
 &
\begin{tikzpicture}[scale=0.1]
  \node[simple] (b) at (10, 0) {};
  \node[simple] (bb) at (20, 0) {};
  \node[simple] (bbb) at (30, 0) {};
\end{tikzpicture}
&
$\cdots$
 \\
\hline \\
\begin{tikzpicture}[scale=0.1]
  \node[simple] (a) at (0,0) {};
  \node[simple] (b) at (10, 0) {};
  \node[simple] (c) at (0, 10) {};
  \node[simple] (d) at (10, 10) {};
  \node[simple] (e) at (0, -10) {};
  \path {
    (a) edge (c)
    (b) edge (d)
    (b) edge (e)
    (a) edge (e)
    }; 
\end{tikzpicture} &
\begin{tikzpicture}[scale=0.1]
  \node[simple] (a) at (0,0) {};
  \node[simple] (b) at (10, 0) {};
  \node[simple] (bb) at (20, 0) {};
  \node[simple] (c) at (0, 10) {};
  \node[simple] (d) at (10, 10) {};
  \node[simple] (e) at (0, -10) {};
  \path {
    (a) edge (c)
    (d) edge (b)
        edge (bb)
    (e) edge (b)
        edge (bb)
    (a) edge (e)
    }; 
\end{tikzpicture}
 &
\begin{tikzpicture}[scale=0.1]
  \node[simple] (a) at (0,0) {};
  \node[simple] (b) at (10, 0) {};
  \node[simple] (bb) at (20, 0) {};
  \node[simple] (bbb) at (30, 0) {};
  \node[simple] (c) at (0, 10) {};
  \node[simple] (d) at (10, 10) {};
  \node[simple] (e) at (0, -10) {};
  \path {
    (a) edge (c)
    (d) edge (b)
        edge (bb)
        edge (bbb)
    (e) edge (b)
        edge (bb)
        edge (bbb)
    (a) edge (e)
    }; 
\end{tikzpicture}
&
$\cdots$
 \\

\end{tabular}
\end{center}

\caption{In each row, cloning one of the vertices repeatedly gives the posets to the right. Decloning at the cloned vertices would revert to the posets at the left. \label{fig:cloning}}
\end{figure}

Let a \emph{seed} be a poset that cannot be decloned (i.e. it has no pair of exchangeable vertices). It is easy to see that repeated decloning associates each poset with a unique seed. Let $P$ be a \emph{sprout} of $Q$ if $P$ can be obtained from $Q$ by a sequence of cloning operations. Let a \emph{garden} $G = G(S)$ be the set of all sprouts of a set $X$ of seeds. To each such $G$ we associate three generating functions:
\begin{compactitem} 
\item the EGF $E(x)$ of labeled elements of $G$;
\item the OGF $O(x)$ of unlabeled elements of $G$;
\item the \emph{seed EGF} $S(x) = \sum_{P} \frac{x^{|V(P)|}}{|V(P)|!}$, where we sum over all labeled seeds $P \in X$. For example, the garden of isolated vertices has a single seed of size $1$; this is just the top row of Figure~\ref{fig:cloning}. 
\end{compactitem}

Call a poset \emph{primitive} if it has no nontrivial automorphism. Note that a primitive poset must be a seed, because otherwise exchanging two exchangeable vertices is a nontrivial automorphism, but the reverse is not true: the poset $(2+2)$ has automorphism group $\ZZ_2$ but has no pairs of exchangeable vertices. This property of $(2+2)$ is actualy crucial to our situation, as we will see very soon in Proposition~\ref{prop:seeds}. Our main result here is that whenever all seeds in a garden are primitive, we can switch between the aforementioned three generating functions in Figure~\ref{fig:trictionary-abstract} with ease:

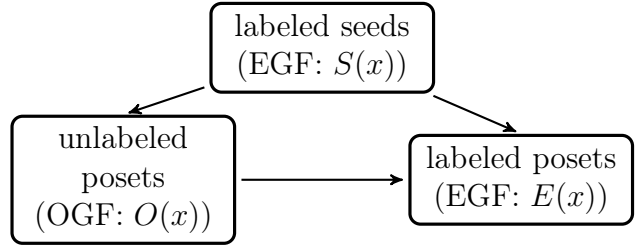
\begin{figure}
\begin{center}
\begin{tikzpicture}[node distance=1cm, auto,]
\node (dummy) {};
\node[punkt, right=of dummy] (e) {labeled posets (EGF: $E(x)$)};
\node[punkt, left=of dummy] (o) {unlabeled posets (OGF: $O(x)$)}
  edge[pil] (e.west);
\node[punkt, above=of dummy] (s) {labeled seeds (EGF: $S(x)$)}
  edge[pil] (o.north)
  edge[pil] (e.north);
\end{tikzpicture}
\end{center}
\caption{The relationship between three families of objects. \label{fig:trictionary-abstract}}
\end{figure}

\begin{thm}
\label{thm:seeds}
When all seeds in a garden are primitive, its OGF $O(x)$, EGF $E(x)$, and seed EGF $S(x)$ satisfy $E(x) = S(e^x-1)$, $E(x) = O(1-e^{-x})$, and $O(x) = S(x/(1-x))$. 
\end{thm}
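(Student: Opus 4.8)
The plan is to exploit the unique decomposition of each garden poset into its seed together with the data of how each seed vertex is ``inflated'' into a cluster of clones, and then to read off the three generating functions from this decomposition.

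First I would verify that exchangeability is an equivalence relation on $V(P)$. Transitivity is the only nontrivial point: if $x$ and $x'$ have identical comparabilities to every other vertex, and likewise $x'$ and $x''$, then so do $x$ and $x''$; and incomparability of $x$ and $x''$ is forced by applying the exchangeability of $x'$ with $x''$ to the vertex $x$ (since $x'$ is incomparable to $x$, so is $x''$). Consequently the maximal clusters of mutually exchangeable vertices are intrinsic to $P$, they form a set partition of $V(P)$, and contracting each cluster to a point yields the unique seed guaranteed in the text. This gives a bijection between garden posets on a fixed vertex set and pairs consisting of a set partition of that set together with a seed structure on the blocks, each block being an arbitrary nonempty cluster; in the labeled case the blocks are distinguishable subsets, so no automorphism ambiguity arises.

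Next I would compute each generating function from this bijection. For the EGF, the decomposition is exactly a composition of combinatorial structures: each block carries the trivial ``nonempty set'' structure with EGF $e^x-1$, while the set of blocks carries a labeled seed structure, whose EGF is precisely $S(x)$ by definition. The compositional formula for exponential generating functions then yields $E(x)=S(e^x-1)$, and I would stress that this step uses only the unique decomposition, not primitivity. For the OGF I would pass to unlabeled objects: an unlabeled garden poset is an unlabeled seed $s$ on $k$ vertices together with an assignment of a positive cluster size to each vertex, taken up to $\operatorname{Aut}(s)$. Here primitivity is essential, since it forces $\operatorname{Aut}(s)$ to be trivial, so distinct size-tuples give non-isomorphic sprouts and $s$ contributes $\bigl(x/(1-x)\bigr)^{k}$. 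Summing over seeds, and using that primitivity also makes the seed EGF agree with the unlabeled seed count, namely $S(x)=\sum_k s_k x^k$ with $s_k$ the number of unlabeled seeds of size $k$ (because the number of labeled seeds of size $k$ is then exactly $k!\,s_k$), gives $O(x)=S\bigl(x/(1-x)\bigr)$.

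Finally I would obtain the third identity for free by substitution: since $(1-e^{-x})/\bigl(1-(1-e^{-x})\bigr)=e^{x}-1$, we get $O(1-e^{-x})=S(e^x-1)=E(x)$, completing the proof. The main obstacle I anticipate is not the algebra but correctly pinning down where primitivity enters: one must argue that it is exactly what prevents seed automorphisms from collapsing distinct cluster-size assignments in the unlabeled count and from inflating the labeled-to-unlabeled seed ratio, which is the one place the hypothesis is genuinely used and where, without it, a P\'olya-theoretic count (as in the appendix) would be required instead.
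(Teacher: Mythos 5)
Your proof is correct, and it takes the elementary route that the paper explicitly sets aside (``elementary counting / Stirling numbers would suffice'') in favor of P\'olya theory. The paper views a sprout of a primitive seed of size $m$ as a coloring of the seed's vertices by positive integers (the cluster sizes), writes down the cycle index $Z_G=p_1^m$ of the trivial automorphism group, and obtains $O$ and $E$ from the substitutions $p_1\mapsto x/(1-x)$ and $p_1\mapsto e^x-1$; you instead make the bijection between sprouts and pairs (seed, cluster-size assignment) explicit and invoke the compositional formula for exponential generating functions. The combinatorial core is identical in both arguments: primitivity kills seed automorphisms, so unlabeled sprouts biject with size-tuples and each unlabeled seed of size $k$ has exactly $k!$ labelings, which is what makes the seed OGF and seed EGF coincide. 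Your version buys a sharper accounting of where the hypothesis is used --- as you note, $E(x)=S(e^x-1)$ follows from the unique seed decomposition alone, with primitivity entering only in the unlabeled count and in the identification of the two seed generating functions --- whereas the paper's cycle-index derivation invokes triviality of the group uniformly; the paper's version, in exchange, generalizes directly to seeds with nontrivial automorphisms via cycle indices (Appendix~\ref{app:seeds}). Two small points you leave implicit, as does the paper: that the quotient by the exchangeability classes has no exchangeable pair (this follows from maximality of the classes, since two exchangeable images would force their preimage classes to merge), and that an isomorphism of sprouts descends to an isomorphism of their seeds (needed so that triviality of $\operatorname{Aut}(s)$ really does separate distinct size-tuples in the unlabeled count); both are routine.
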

For sake of smoothness of exposition, we leave the proof to Appendix~\ref{app:seeds}, where we also discuss possible generalizations and connections to Polya theory and cycle indices. For now, it is instructive to note that since every unlabeled primitive seed of size $n$ corresponds to exactly $n!$ labeled primitive seeds, \textbf{for a single primitive seed, the seed OGF is equal to the seed EGF}. Figure~\ref{fig:trictionary-examples} gives some examples of this principle. In this and future figures, the ``$A019590$'' type of notation refers to OEIS \cite{oeis} sequence indices.

\begin{figure}
\begin{center}
\begin{tabular}{c}

\begin{tikzpicture}[node distance=1cm, auto,]
\node (dummy) {};
\node[punkt, right=of dummy] (e) {$e_n$: ordered set partition of $[n]$ (Fubini numbers)};
\node[punkt, left=of dummy] (o) {$o_n$: compositions of $n$ (basically powers of $2$)}
  edge[pil] (e.west);
\node[punkt, above=of dummy] (s) {$s_n$: labeled $n$-chains (permutations) }
  edge[pil] (o.north)
  edge[pil] (e.north);
\end{tikzpicture} \\ 
\begin{tabular}{|l|l|}
\hline
$s_n = A000142: 1, 1, 2, 6, 24, 120, \ldots$ &
 $S(x) = \frac{1}{1-x}$ \\
\hline
$o_n = A011782: 1, 1, 2, 4, 8, 16, 32, \ldots$ & 
$O(x) = \frac{1-x}{1-2x}$ \\
\hline
$e_n = A000670: 1, 1, 3, 13, 75, 501, \ldots$ & 
 $E(x) = \frac{1}{2-e^{x}}$ \\
\hline
\end{tabular} \\[1cm]

\begin{tikzpicture}[node distance=1cm, auto,]
\node (dummy) {};
\node[punkt, right=of dummy] (e) {$e_n$: labeled clones of a single object with total size $n$};
\node[punkt, left=of dummy] (o) {$o_n$: unlabeled clones of a single object of size $n$ (always 1)}
  edge[pil] (e.west);
\node[punkt, above=of dummy] (s) {$s_n$: a single object (always 1)}
  edge[pil] (o.north)
  edge[pil] (e.north);
\end{tikzpicture} \\ 
\begin{tabular}{|l|l|}
\hline
$s_n = A019590: 1, 1, 0, 0, 0, 0, 0, \ldots$ &
 $S(x) = 1 + x$ \\
\hline
$o_n = A000012: 1, 1, 1, 1, 1, 1, 1, \ldots$ & 
$O(x) = \frac{1}{1-x}$ \\
\hline
$e_n = A000012: 1, 1, 1, 1, 1, 1, \ldots$ & 
 $E(x) = e^{x}$ \\
\hline
\end{tabular} 

\end{tabular}
\end{center}
\caption{Examples of gardens with primitive seeds. Top: an unlabeled seed (the $n$-chain) for each $n$, or equivalently $n!$ labeled seeds. Bottom: a single nontrivial seed of a single vertex.  \label{fig:trictionary-examples}}
\end{figure}

Of particular relevance to us, however, is
\begin{prop}
\label{prop:seeds}
If a seed avoids $(2+2)$, then it is primitive. 
\end{prop}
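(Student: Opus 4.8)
The plan is to prove the statement directly: assuming that $P$ both avoids $(2+2)$ and is a seed, I would show that every automorphism $\sigma$ of $P$ is the identity, so that $P$ is primitive. For a vertex $x$ write $D(x) = \{y : y < x\}$ and $U(x) = \{y : y > x\}$ for its strict down-set and up-set. The first step is a reformulation of ``seed'' valid in any poset: two distinct vertices $x, x'$ are exchangeable if and only if $D(x) = D(x')$ and $U(x) = U(x')$. Indeed $x < y \Leftrightarrow y \in U(x)$ and $x > y \Leftrightarrow y \in D(x)$, so the exchangeability conditions are exactly these two set-equalities; moreover $D(x) = D(x')$ already forces incomparability (if $x < x'$ then $x \in D(x') = D(x)$, giving $x < x$). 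Hence $P$ is a seed precisely when the map $\Phi \colon x \mapsto (D(x), U(x))$ is injective.

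The second step invokes the classical structure theory of interval orders. Since $P$ avoids $(2+2)$, the strict down-sets $\{D(x) : x \in V(P)\}$ are totally ordered by inclusion, and dually for the up-sets (Fishburn; see also \cite{bogart_2+2}). Therefore the relation $x \le_D x'$ defined by $D(x) \subseteq D(x')$ is a \emph{total} preorder on $V(P)$, whose equivalence classes (vertices sharing a common down-set) form a finite chain $L_0 <_D L_1 <_D \cdots <_D L_k$; write $L'_0 <_U \cdots <_U L'_m$ for the analogous chain of up-set classes.

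The third and central step is to show that any $\sigma \in \on{Aut}(P)$ fixes each $L_i$ setwise. Because $\sigma$ is an order-automorphism, $D(\sigma x) = \sigma(D(x))$, and as a bijection $\sigma$ preserves inclusions in both directions; thus $\sigma$ is an automorphism of the totally preordered set $(V(P), \le_D)$ and preserves its equivalence classes, so it descends to an order-automorphism of the finite chain $L_0 <_D \cdots <_D L_k$. A finite chain admits only the trivial automorphism, so $\sigma(L_i) = L_i$ for every $i$; the identical argument applied to $\le_U$ yields $\sigma(L'_j) = L'_j$. Finally, $\sigma$ then fixes every intersection $L_i \cap L'_j$ setwise, that is, every fiber of $\Phi$. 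Since $P$ is a seed, the first step makes $\Phi$ injective, so each nonempty fiber is a single vertex; hence $\sigma$ fixes every vertex and $\sigma = \on{id}$.

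I expect the third step to carry the real content: everything hinges on $(2+2)$-avoidance upgrading the down-set preorder to a \emph{total} preorder, which is exactly what prevents $\sigma$ from permuting ``parallel'' blocks of vertices. This is precisely the phenomenon that fails for $(2+2)$ itself, which is a seed with a nontrivial automorphism but is of course not $(2+2)$-avoiding, so the hypothesis rules it out. The remaining steps are routine once this totality is in hand; the only point requiring care is checking that $\sigma$ respects the down-set and up-set equivalence classes, which follows from $D(\sigma x) = \sigma(D(x))$ together with injectivity of $\sigma$.
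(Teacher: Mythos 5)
Your proof is correct, but it takes a genuinely different route from the paper's. The paper argues by contrapositive and stays entirely elementary: given a seed with a nontrivial automorphism $g$, it picks an element $x$ maximal among those moved by $g$, uses non-exchangeability of $x$ and $g(x)$ to produce a witness $y$, and then checks that $x, y, g(x), g(y)$ form an explicit copy of $(2+2)$ --- no structure theory of interval orders is invoked. You instead run the argument forward through Fishburn's characterization: $(2+2)$-avoidance makes the strict down-sets (and dually the up-sets) totally ordered by inclusion, an automorphism must fix each level of the resulting finite chains, hence fixes each fiber of $x \mapsto (D(x), U(x))$ setwise, and seedness makes those fibers singletons. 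Your reduction of ``seed'' to injectivity of $\Phi$ is right (including the observation that equal down-sets already force incomparability), and the descent of $\sigma$ to the quotient chains is sound. What your approach buys is a stronger structural statement essentially for free: for \emph{any} interval order, $\on{Aut}(P)$ embeds in the product of symmetric groups on the exchangeability classes $L_i \cap L'_j$, of which the proposition is the special case where all classes are singletons; this meshes nicely with the cycle-index viewpoint of Appendix~A. What the paper's argument buys is brevity and self-containedness --- it needs nothing beyond the definitions and produces the forbidden $(2+2)$ by hand. Both are valid; yours relies on an external (though classical) theorem where the paper relies on none.
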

\begin{proof}
Suppose a seed $P$ is not primitive. Take any nontrivial automorphism $g$ and find a maximal element $x \in P$ not fixed by $g$, so $g(x) = x' \neq x$. Now, since we have a seed, $x$ and $x'$ must not be exchangeable. Since nothing lies above $x$ in $P$, this means there is some $y \in P$ such that $x > y$ but $x' \ngtr y$. Let $g(y) = y'$. Since $g$ is an automorphism, we have $x \ngtr y'$ and $x' > y'$. This means $x, y, x', y'$ form a $(2+2)$.
\end{proof}

In particular, the gardens corresponding to interval orders and semiorders are both $(2+2)$-avoiding, which explains the numerology that we observed at the beginning of this chapter and answers Stanley's question; see Figure~\ref{fig:numerology-explained}. For later sections in our work, we now know that for the classes of graded posets that are $(2+2)$-avoiding, we can cheaply obtain the EGFs and OGFs from one another, and we should not expect this nice situation to occur for the ``all graded posets'' and ``$(3+1)$-avoiding graded posets'' classes (indeed, we do not have nice OGFs for them). A few other remarks on $(2+2)$-avoidance follow:
\begin{compactitem}
\item For her work on graded semiorders (which she called \emph{fixed-length semiorders}), Hu \cite{hu} cited our work (when it was in preparation) to switch between labeled and unlabeled enumeration.
\item One of the main results \cite[Theorem~7.1]{postnikov-stanley} in Postnikov and Stanley's work on deformations of Coxeter hyperplane arrangements is as a special case of Theorem~\ref{thm:seeds}.
\item We can use Proposition~\ref{prop:seeds} to immediately generalize our observation about semiorders to that of \emph{marked interval orders} as found in Stanley \cite{stanley-arrangements}. We leave this discussion to Appendix~\ref{sec:marked}.
\item Philosophically, Proposition~\ref{prop:seeds} explains why we would expect $(2+2)$-avoidance to be studied by looking at exchangeability of elements. This seems to have been the approach in the enumeration of $(2+2)$-free posets by indistinguishable elements by Dukes et al. \cite{dukes} 
\item Counting compositions in Figure~\ref{fig:trictionary-examples} can be cutely put into the frame of $(2+2$)-avoidance; compositions are just $(2+1)$-avoiding posets, which happen to be $(2+2)$-avoiding!
\end{compactitem}

\begin{figure}[!t] 
\begin{center}
\begin{tabular}{c}
\begin{tikzpicture}[node distance=1cm, auto,]
\node (dummy) {};
\node[punkt, right=of dummy] (e) {$e_n$: labeled interval orders};
\node[punkt, left=of dummy] (o) {$o_n$: unlabeled interval orders, ascent sequences}
  edge[pil] (e.west);
\node[punkt, above=of dummy] (s) {$s_n$: upper-triangular $0$-$1$ matrices with no zero rows/columns}
  edge[pil] (o.north)
  edge[pil] (e.north);
\end{tikzpicture} \\
\begin{tabular}{|l|l|}
\hline
$s_n = A138265(n):1, 1, 1, 2, 5, 16, 61, \ldots$ &
 $S(x) = \sum_n \prod_k(1-(1+x)^{-k})$ \\
\hline
$o_n = A022493(n): 1, 1, 2, 5, 15, 52, 217, \ldots$ & 
$O(x) = \sum_n \prod_k (1-(1-x)^k)$ \\
\hline
$e_n = A079144(n): 1, 1, 3, 19, 207, 3451, \ldots$ & 
 $E(x) = \sum_n \prod_k (1-e^{-kx})$ \\
\hline
\end{tabular} \\[1cm]
\begin{tikzpicture}[node distance=1cm, auto,]
\node (dummy) {};
\node[punkt, right=of dummy] (e) {$e_n$: labeled semiorders};
\node[punkt, left=of dummy] (o) {$o_n$: unlabeled semiorders (Catalan numbers)}
  edge[pil] (e.west);
\node[punkt, above=of dummy] (s) {$s_n$: labeled Motzkin objects }
  edge[pil] (o.north)
  edge[pil] (e.north);
\end{tikzpicture} \\

\begin{tabular}{|l|l|}
\hline
$s_n = A001006(n) \cdot n!: 1, 1, 2, 12, 96, 1080, \ldots$ &
$S(x) = \cdots$ \\
\hline
$o_n = A000108(n): 1, 1, 2, 5, 14, 42, 132, \ldots$ &
$O(x) = \frac{1-\sqrt{1-4x}}{2x}$ \\
\hline
$e_n = A006531(n): 1, 1, 3, 19, 183, 2371, \ldots$ &
$E(x) = \cdots$ \\
\hline
\end{tabular}
\end{tabular}
\end{center}
\caption{Triplets of generating functions for interval orders (top) and semiorders (bottom). Note the juxtoposition of not-obviously related objects. \label{fig:numerology-explained}}
\end{figure}

\section{$(3+1)$- and $(2+2)$-Avoiding Graded Posets}
\label{sec:(3+1)+(2+2)}

Now that we enter the territory of poset avoidance, we introduce a simple and powerful idea that we use in all the remaining cases. The intuition underneath the technical-sounding description is that the poset-avoidance of sums of chains in graded posets is a ``local condition''; in other words, if a graded poset contains a sum of chains, we do not have to look arbitrarily far to find those chains. 

First, we give an auxiliary definition: given a sum of chains $C = (a_1 + \cdots + a_m)$, we say that $P$ \emph{grade-contains}(resp.\ \emph{grade-avoids}) $a_1[b_1] + \cdots + a_m[b_n]$, where the $b_i$ are nonnegative integers, if we can (resp.\ cannot) find $m$ incomparable chains $C_1, \cdots, C_m$ and a fixed $b$ such that for all $i$, the $i$-th chain has $a_i$ elements, with the $j$-th element of the chain in the $(b_i + j + b)$-th rank of $P$. Thus, the property of containing $a_1[b_1] + \cdots + a_m[b_n]$ is an equivalence condition under adding a constant to all $b_i$. From now on, we slightly abuse notation by just saying ``contains'' (resp.\ ``avoids'') instead of ``grade-contains'' (resp.\ ``grade-avoids'') for brevity. Intuitively, grade-avoidance (containment) captures the idea of avoidance (containment) of chains at specified relative levels whose adjacent elements are adjacent in $P$; for example, avoiding $(2[0]+2[0])$ means avoiding $2$ vertex-disjoint edges (with no other edge between the $4$ vertices) in the same edge-level.

\begin{lem}
\label{lem:locality}
A graded poset $P$ is $(x + y)$-avoiding if and only if it avoids all $(x[b_x] + y[b_y])$ where $\min(b_x, b_y) = 0$, $b_x < y$, and $b_y < x$. 
\end{lem}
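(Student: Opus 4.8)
The plan is to prove the two directions separately; the forward implication is immediate, while the reverse implication carries all of the content. Throughout I use that grade-containment of $(x[b_x]+y[b_y])$ means a pair of \emph{saturated} chains (consecutive ranks), of lengths $x$ and $y$, every element of one incomparable to every element of the other, whose rank-intervals overlap exactly when $\min(b_x,b_y)=0$, $b_x<y$, and $b_y<x$. The forward direction ($P$ avoids $(x+y)$ $\Rightarrow$ $P$ avoids every listed pattern) is just that grade-containment strengthens containment: if $P$ grade-contained some $(x[b_x]+y[b_y])$, then forgetting the ranks leaves two incomparable chains of lengths $x$ and $y$, i.e.\ a copy of $(x+y)$.

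For the reverse direction I would argue the contrapositive: assuming $P$ contains $(x+y)$ via incomparable chains $A=\{a_1<\cdots<a_x\}$ and $B=\{b_1<\cdots<b_y\}$ (every $a_i$ incomparable to every $b_j$), produce a single pattern $(x[b_x]+y[b_y])$ in the stated range. The chains $A,B$ are a priori neither saturated nor overlapping, so there are two obstacles: (i) making both chains saturated, and (ii) sliding them to overlap in rank, each without destroying mutual incomparability.

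For obstacle (i) the key observation is that any element $c$ sandwiched strictly between two elements of one chain is forced to be incomparable to the \emph{other} chain: if $a_i<c<a_{i+1}$ and $c$ were comparable to some $b_j$, then $c>b_j$ gives $a_{i+1}>b_j$ while $c<b_j$ gives $a_i<b_j$, either contradicting incomparability of $A$ and $B$. Since $P$ is graded, each interval $[a_i,a_{i+1}]$ has elements at all intermediate ranks, so I can refine $A$ to a saturated chain $\hat A$ from $a_1$ to $a_x$, all of whose inserted elements are incomparable to $B$; hence $\hat A$ is incomparable to $B$. Rerunning the same sandwiching argument against $\hat A$ (which is incomparable to each $b_j$) refines $B$ to a saturated $\hat B$ incomparable to $\hat A$. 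Truncating $\hat A$ and $\hat B$ to $x$, respectively $y$, consecutive elements yields saturated, mutually incomparable chains of the correct lengths. For obstacle (ii), if these two chains have disjoint rank-intervals with one lying entirely above the other, I slide the upper chain down one rank at a time: drop its top element and adjoin below its bottom $y_1$ a cover $y_0\lessdot y_1$, which exists while $y_1$ is non-minimal (guaranteed in the graded poset while the intervals stay disjoint). A rank comparison keeps $y_0$ incomparable to the lower chain, since $y_0>x_i$ would force $y_1>x_i$, while $y_0<x_i$ cannot happen on ranks; so incomparability survives and the offset drops by one. Iterating until the intervals meet, then normalizing the global shift so the lower starting rank is $0$, gives $\min(b_x,b_y)=0$, and the overlap of a length-$x$ and a length-$y$ interval is precisely $b_x<y$ and $b_y<x$.

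The main obstacle, deserving the most care, is preserving mutual incomparability through both the refinement and the sliding; the sandwiching observation is exactly what makes the saturation step work, and the analogous rank-propagation argument makes the sliding step work. Everything else — existence of intermediate-rank elements and of a cover below a non-minimal element in a graded poset, and the bookkeeping that overlap is equivalent to the two inequalities — is routine.
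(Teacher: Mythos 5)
Your proof is correct and follows essentially the same route as the paper's: the forward direction is the trivial "grade-containment implies containment," and the reverse direction first saturates both chains via the same sandwiching argument and then slides the upper chain down one rank at a time by adjoining a covered element below and deleting the top, with incomparability preserved by the same rank comparison. If anything, your version is slightly more careful than the paper's in re-running the sandwiching argument against the already-saturated $\hat A$ before refining $B$, where the paper appeals to symmetry.
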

\begin{proof}
Suppose $P$ contains an $(x+y)$ somewhere. As grade-containment implies containment, we only need to show that $P$ must also contain a $x[b_x] + y[b_y]$ somewhere with the given inequalities. 

Suppose the $x$-chain has $x$ elements of ranks $b_x < b_x + a_1< b_x + a_2 < \cdots < b_x + a_{x-1}$. This can be extended to a maximal chain where adjacent elements are also adjacent in rank. Since the two elements of rank $b_x$ and $b_x + a_{x-1}$ are both incomparable with all elements of the $y$-chain, all elements between them are also incomparable with the $y$-chain; in particular, we can take the first $x$ adjacent elements at ranks $b_x < b_x + 1 < b_X + 2 < \cdots$, and they will all be incomparable with the $y$-chain. This means we can assume the $x$-chain and (by symmetry) the $y$-chain both appear with elements that are adjacent in rank. Equivalently, $P$ contains some $x[b_x] + y[b_y]$ for some $b_x$ and $b_y$. 

Without loss of generality, we can assume $b_x = 0 < b_y$. Suppose $b_y > x - 1$, meaning that the lowest element of the $y$-chain has higher rank than the highest element of the $x$-chain. Consider any element $s$ covered by the lowest element of the $y$-chain; it is at rank $b_y - 1$, which must be at at least the same rank as any element of the $x$-chain. If it is comparable to any element $r$ in the $x$-chain, we must have $s \geq r$. However, this is impossible as it would mean all the elements of the $y$-chain were greater than $r$, as they are greater than $s$. Thus, we have found a different $y$-chain which has all elements incomparable to the $x$-chain (simply add $s$ to the $y$-chain and remove its top element), which is now ``lower'' in $P$. Repeating this process shows that we can decrease the gap between the two chains to $0$.
\end{proof}

In particular, the set we need to check is finite set since we can assume at least one of the $b_i$'s are $0$. We immediately get that a graded poset $P$ is $(2+2)$-avoiding if and only if it avoids $(2[0] + 2[0])$ and $(2[0] + 2[1])$. Also, $P$ is $(3+1)$-avoiding if and only if it avoids $(3[0] + 1[i])$ for $i$ equal to $0, 1, $ and $2$. Our strategy will now to be to convert this finite list of conditions to a finite list of combinatorial conditions to check. We again give some auxiliary definitions: Let the \emph{up-set} $US(p)$ (resp.\ \emph{down-set $DS(p)$}) of a vertex $p$ be all the vertices that cover $p$ (resp.\ covered by $p$). Call $p$ \emph{up-seeing} (resp.\ \emph{down-seeing}) if $US(p)$ (resp.\ $DS(P)$) is the entire $P(i+1)$ (resp.\ $P(i-1)$). Call a vertex which is both up-seeing and down-seeing \emph{all-seeing}. We obtain:
\begin{lem}
\label{lem:2+2 locality}
A graded poset $P$ is $(2+2)$-avoiding if and only if both of the following conditions are true:
\begin{compactenum}
\item For each level $i$, the up-sets of $P(i)$ (equivalently, the down-sets of $P(i+1)$) form a chain under inclusion (this condition is equivalent to the corresponding edge-level being $(2[0]+2[0])$-avoiding). 
\item Every level of $P$ contains at least one all-seeing vertex.
\end{compactenum}
\end{lem}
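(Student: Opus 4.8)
The plan is to invoke Lemma~\ref{lem:locality}, which tells us that a graded $P$ is $(2+2)$-avoiding exactly when it avoids both $(2[0]+2[0])$ and $(2[0]+2[1])$, and then to match these two obstructions against the two conditions in the statement. I expect condition~(1) to be a rank-by-rank restatement of $(2[0]+2[0])$-avoidance, and condition~(2) to be the combinatorial shadow of $(2[0]+2[1])$-avoidance. I would prove the two halves of each biconditional separately; the forward direction of condition~(2) is where the actual work lies.

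For condition~(1), within a single edge-level I would use the standard bipartite fact that the up-sets of the bottom rank form a chain under inclusion if and only if the down-sets of the top rank do, and that this happens if and only if the edge-level is $2K_2$-free. Since $(2[0]+2[0])$ is precisely an induced $2K_2$ (two vertex-disjoint edges with no further edges among the four vertices), this is exactly $(2[0]+2[0])$-avoidance: two level-$i$ vertices $w, w'$ with incomparable up-sets are witnessed by some $x \in US(w) \setminus US(w')$ and $z \in US(w') \setminus US(w)$, and then $\{w,x\}$, $\{w',z\}$ is the forbidden pattern, and conversely. For the easy half of condition~(2), an all-seeing $v \in P(j)$ satisfies $a < v < d$ for every $a \in P(j-1)$ and $d \in P(j+1)$, so no such pair is incomparable; as every $(2[0]+2[1])$ across ranks $j-1,j,j+1$ requires an incomparable pair in $P(j-1) \times P(j+1)$, condition~(2) forbids $(2[0]+2[1])$. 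Combined with Lemma~\ref{lem:locality}, this gives that conditions~(1) and~(2) imply $(2+2)$-avoidance. The boundary ranks $0$ and $M$ need no extra argument: there ``all-seeing'' degenerates to ``up-seeing'' or ``down-seeing,'' and the maximal-up-set (resp.\ maximal-down-set) vertex supplied by condition~(1) already qualifies.

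The main obstacle is the converse for condition~(2): a $(2+2)$-avoiding $P$ must carry an all-seeing vertex on every interior rank $j$. The first move is a reduction I would make precise: if some $a \in P(j-1)$ and $d \in P(j+1)$ are incomparable, then (since the only possible relation would be a length-two chain through $P(j)$) their level-$j$ neighborhoods $US(a)$ and $DS(d)$ are disjoint, and both are nonempty by gradedness; picking $x \in US(a)$ and $y \in DS(d)$ then yields edges $a$--$x$ and $y$--$d$ with no edges $a$--$y$, $x$--$d$ and with $a, d$ incomparable, i.e.\ a genuine $(2[0]+2[1])$. Hence $(2+2)$-avoidance forces every cross-pair $a \in P(j-1)$, $d \in P(j+1)$ to be comparable.

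The subtle point is that this global comparability readily produces a down-seeing vertex and an up-seeing vertex on rank $j$ (the maximizers of down-set and of up-set guaranteed by the chain structure of condition~(1)), but nothing forces these to be the same vertex, and small examples show they genuinely differ when a $(2+2)$ is present. I would resolve this with a two-stage extremal argument. Condition~(1) makes the maximal-down-set vertex down-seeing; among all down-seeing vertices I then select $D^\ast$ with maximal up-set (well defined since up-sets form a chain) and claim $D^\ast$ is all-seeing. The key auxiliary fact is that, for any $d \in P(j+1)$, the down-neighbor of $d$ with largest down-set is itself down-seeing: were it to miss some $a \in P(j-1)$, comparability of $a$ with $d$ would exhibit another down-neighbor of $d$ whose down-set strictly contains it, contradicting maximality along the down-set chain. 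Now if $D^\ast$ missed some $d \in P(j+1)$, this down-seeing down-neighbor of $d$ would contain $d$ in its up-set and hence have up-set strictly larger than $US(D^\ast)$, contradicting the choice of $D^\ast$. Thus $D^\ast$ is up-seeing as well, hence all-seeing, which finishes the hard direction.
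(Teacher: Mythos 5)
Your proof is correct. Its skeleton coincides with the paper's: both reduce to avoiding $(2[0]+2[0])$ and $(2[0]+2[1])$ via Lemma~\ref{lem:locality}, both identify condition~(1) with $(2[0]+2[0])$-avoidance through the chain of up-sets (with gradedness supplying the up-seeing and down-seeing vertices, which covers the boundary ranks), and both handle the easy direction by observing that an all-seeing vertex on $P(j)$ makes every pair in $P(j-1)\times P(j+1)$ comparable, killing any $(2[0]+2[1])$. The divergence is in the hard direction. The paper argues by contradiction: assuming $P(i)$ has no all-seeing vertex, it takes $w\in P(i+1)$ and $u\in P(i-1)$ with the fewest neighbors in $P(i)$ (so that every neighbor of $w$ is up-seeing and every neighbor of $u$ is down-seeing), produces the chains $u<y$ and $x<w$ from an up-seeing $x$ and a down-seeing $y$, and shows that avoiding $(2[0]+2[1])$ forces $w>z>u$ for some $z\in P(i)$, which is then all-seeing. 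You instead first isolate the clean intermediate fact that $(2[0]+2[1])$-avoidance is exactly comparability of all cross-pairs in $P(j-1)\times P(j+1)$ (which is, in effect, condition~2 of Lemma~\ref{lem:3+1 locality}), and then run a direct two-stage maximization on $P(j)$ itself --- max down-set among down-neighbors of each $d$, then max up-set among down-seeing vertices --- to construct the all-seeing vertex outright. Both arguments lean on the same chain structure from condition~(1); yours is constructive rather than by contradiction and extracts a reusable comparability statement, while the paper's stays inside a single four-vertex configuration. Either is a complete proof.
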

\begin{proof}

By definition, $P$ avoids $(2[0] + 2[0])$ if and only if each edge-level of $P$ avoids $(2+2)$. We show that this is equivalent to the up-sets forming a chain. Without loss of generality, consider the edge-level consisting of $P(0) \cup P(1)$. Suppose the up-sets do not form a chain. This means there is some pair of incomparable $US(i)$ and $US(j)$, with $i, j \in P(0)$. Equivalently, there is a pair of elements $i', j \in P(1)$, $i'>i$, $j'>j$, but $i' \ngtr j$ and $j' \ngtr i$. This is exactly a $(2+2)$. These steps are reversible, so we are done. When the up-sets form a chain, the maximal up-set must be the entire level above, else there is some vertex on the upper level that does not have a neighbor on this level, violating the gradedness of $P$. This is equivalent to saying there must be at least one up-seeing vertex. A symmetrical argument shows there must also be at least one down-seeing vertex.

By Lemma~\ref{lem:locality}, $P$ being $(2+2)$-avoiding is equivalent to $P$ avoiding $(2[0] + 2[0])$ and $(2[0] + 2[1])$. Suppose $P$ already avoids $(2[0] + 2[0])$, we want to characterize when $P$ also avoids $(2[0] + 2[1])$. We claim this is equivalent to the second condition.  Take some level $P(i)$. Take $w$ to be a vertex in $P(i+1)$ with the fewest neighbors in $P(i)$. We claim that such a $w$ has the property that if any $v \in P(i)$ is a neighbor of $w$, we must have $v$ be up-seeing. This is because if $v$ is not connected to some $w' \in P(i+1)$, then $DS(w')$ is a proper subset of $DS(w)$, which contradicts the claim that $w$ has the fewest neighbors. Similarly, let $u$ be a vertex in $P(i-1)$ with the fewest neighbors in $P(i)$. Suppose $P(i)$ had no all-seeing vertices. Since it must have at least one up-seeing vertex $x$ and one down-seeing vertex $y$, $x$ must not cover $u$ and $y$ must not be covered by $w$ (else $x$ and/or $y$ would be all-seeing). But $y > u$ and $w > x$, so in order to avoid $(2[0] + 2[1])$ we must have $w > u$ in $P$. This means there is some $w > z > u$, where $z \in P(i)$. However, since $z$ is a neighbor of $w$ and $u$, $z$ must be both up-seeing and down-seeing, a contradiction.

Conversely, if our two conditions hold, we know we have $(2[0] + 2[0])$ by equivalence to the first condition. Furthermore, suppose we have a $(2[0] + 2[1])$ somewhere; say the $3$ relevant ranks are $i$, $i-1$, and $i+1$. Take an all-seeing vertex $v \in P(i)$. It is  comparable to both the higher element in the $2[1]$ and the lower element in the $2[0]$, which is a contradiction as they were supposed to be incomparable.
\end{proof}

\begin{lem}
\label{lem:3+1 locality}
A graded poset $P$ of height $n$ being $(3+1)$-avoiding is equivalent to both of the following conditions being true:
\begin{compactenum}
\item Every vertex $v$ in $P$ is up-seeing and/or down-seeing. 
\item For each $i \leq n-2$, every pair of vertices $v \in P(i)$ and $w \in P(i+2)$ are comparable.
\end{compactenum}
\end{lem}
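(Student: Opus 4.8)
The plan is to lean on Lemma~\ref{lem:locality}, which reduces global $(3+1)$-avoidance to a finite local list: $P$ is $(3+1)$-avoiding if and only if it avoids $3[0]+1[i]$ for $i$ equal to $0$, $1$, and $2$. I would then prove the two stated conditions jointly equivalent to this local list, handling each implication separately and using gradedness (every non-maximal vertex is covered by something one rank up, and every non-minimal vertex covers something one rank down) to fill in saturated chains.

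For the forward direction I assume $P$ is $(3+1)$-avoiding. I would first extract condition 2. Suppose $v \in P(i)$ and $w \in P(i+2)$ are incomparable. Since $P$ is graded, $w$ covers some $z_1 \in P(i+1)$, which covers some $z_0 \in P(i)$, giving a saturated chain $z_0 < z_1 < w$. One checks $v$ is incomparable to each member: $v \neq z_0$ (otherwise $v < w$) forces incomparability at rank $i$; $v < z_1$ would give $v < w$; and $v, w$ are incomparable by hypothesis. This is a $3[0]+1[0]$, contradicting avoidance, so condition 2 holds.

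Condition 1 is the step I expect to be the main obstacle, precisely because producing the middle $3$-chain is not automatic and must be supplied by condition 2. Suppose some $v \in P(i)$ is neither up-seeing nor down-seeing. Then there is $u \in P(i-1)$ with $u \not< v$ and a $w \in P(i+1)$ with $v \not< w$, both incomparable to $v$. The key move is to invoke condition 2, already established, on $u \in P(i-1)$ and $w \in P(i+1)$: these lie two ranks apart, so they are comparable, forcing $u < w$, and a saturated chain through them produces $t \in P(i)$ with $u < t < w$. Then $v$ is incomparable to all of $u$, $t$, $w$ (using $v \neq t$, else $u < v$), which is a $3[0]+1[1]$ and a contradiction; hence condition 1 holds.

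For the reverse direction I assume conditions 1 and 2 and suppose, for contradiction, that $P$ contains some $3[0]+1[i]$: a saturated chain $c_0 < c_1 < c_2$ at ranks $j$, $j+1$, $j+2$ together with a vertex $s$ at rank $j+i$ incomparable to all three. I would argue by cases on $i$. If $i=0$ (respectively $i=2$), then $s$ and $c_2$ (respectively $s$ and $c_0$) sit two ranks apart yet are incomparable, contradicting condition 2. If $i=1$, then $s \in P(j+1)$ is incomparable to both $c_0 \in P(j)$ and $c_2 \in P(j+2)$; but condition 1 forces $s$ to be up-seeing, whence $s < c_2$, or down-seeing, whence $s > c_0$, and either alternative contradicts an incomparability. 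Thus $P$ avoids the entire local list, so by Lemma~\ref{lem:locality} it is $(3+1)$-avoiding, completing the equivalence.
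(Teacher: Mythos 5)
Your proof is correct and follows exactly the route the paper intends: the paper itself omits the argument, deferring to \cite{lewis-zhang} and noting it is ``similar in spirit'' to Lemma~\ref{lem:2+2 locality}, and your reduction to the local patterns $3[0]+1[i]$ for $i \in \{0,1,2\}$ via Lemma~\ref{lem:locality}, followed by the forward extraction of the two conditions (using condition 2 to supply the middle chain for condition 1) and the three-case reverse check, is precisely that argument. One small point you handle implicitly but correctly: vertices on the extreme ranks are vacuously up- or down-seeing, so a vertex failing both conditions really does have nonempty adjacent ranks supplying the witnesses $u \in P(i-1)$ and $w \in P(i+1)$.
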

\begin{proof}
This result can be found in \cite[Theorem 3.1]{lewis-zhang}; the proof is similar in spirit to that of Lemma~\ref{lem:2+2 locality} and not marginally instructive, so we omit it.
\end{proof}

It is refreshing that these two locality conditions synergize:
\begin{thm}
\label{thm:3+1 and 2+2}
A graded poset $P$ is $(3+1)$- and $(2+2)$- avoiding if and only if:
\begin{compactenum}
\item $P$ avoids $(2[0] + 2[0])$;
\item every vertex in $P$ is up-seeing and/or down-seeing;
\item every level of $P$ contains at least one all-seeing vertex.
\end{compactenum}
\end{thm}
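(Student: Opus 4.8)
The plan is to read both avoidance properties off the two locality lemmas already in hand and to observe that one of the four conditions those lemmas demand is supplied automatically by the others. Let me abbreviate the two conditions of Lemma~\ref{lem:2+2 locality} as (A1) ``$P$ avoids $(2[0]+2[0])$'' and (A2) ``every level carries an all-seeing vertex,'' and the two conditions of Lemma~\ref{lem:3+1 locality} as (B1) ``every vertex is up-seeing and/or down-seeing'' and (B2) ``any $v \in P(i)$ and $w \in P(i+2)$ are comparable.'' The three conditions in the statement are then exactly (A1) as condition~1, (B1) as condition~2, and (A2) as condition~3; notably (B2) is absent from the list.

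For the forward direction I would argue directly: if $P$ is $(2+2)$-avoiding then Lemma~\ref{lem:2+2 locality} gives (A1) and (A2), and if $P$ is $(3+1)$-avoiding then Lemma~\ref{lem:3+1 locality} gives (B1) and (B2). In particular (A1), (B1), and (A2) all hold, which are precisely the three stated conditions.

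The content lies in the converse, and the crux is that (A2) by itself forces (B2)---this is the ``synergy'' being advertised. Given $v \in P(i)$ and $w \in P(i+2)$, I would pass to the intermediate level $P(i+1)$, which is nonempty by gradedness (both $P(i)$ and $P(i+2)$ are nonempty), and select an all-seeing vertex $z$ there guaranteed by (A2). Since $z$ is down-seeing, $DS(z) = P(i)$, so $z$ covers $v$ and hence $z > v$; since $z$ is up-seeing, $US(z) = P(i+2)$, so $w$ covers $z$ and hence $w > z$. Thus $v < z < w$, which is exactly the comparability demanded by (B2). Now (A1) and (A2) yield $(2+2)$-avoidance through Lemma~\ref{lem:2+2 locality}, while (B1) (condition~2) together with the freshly derived (B2) yield $(3+1)$-avoidance through Lemma~\ref{lem:3+1 locality}, closing the equivalence.

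I do not anticipate a genuine obstacle once the two locality lemmas are granted: the whole theorem collapses to the single sandwiching observation that an all-seeing vertex on level $i+1$ lies above every vertex of $P(i)$ and below every vertex of $P(i+2)$. The only points requiring a word of care are confirming that $P(i+1)$ is nonempty (immediate from gradedness) and keeping straight which abstract condition matches which numbered condition in the statement.
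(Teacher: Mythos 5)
Your proposal is correct and follows essentially the same route as the paper: both directions are read off Lemmata~\ref{lem:2+2 locality} and \ref{lem:3+1 locality}, and the only substantive step is deriving the comparability of $P(i)$ with $P(i+2)$ by sandwiching an all-seeing vertex of $P(i+1)$ between them. Your version is, if anything, slightly more careful in spelling out the chain $v < z < w$ rather than appealing loosely to ``comparability.''
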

\begin{proof}
These conditions are necessary by the previous lemmata. Since the first and the third conditions include the conditions of Lemma~\ref{lem:2+2 locality}, it suffices to show that we can prove the unused condition of Lemma~\ref{lem:3+1 locality}, namely that for each $i$ of height $n-2$ or less, every pair of vertices $v \in P(i)$ and $w \in P(i+2)$ are comparable.

Take $v \in P(i)$, we will show that it is comparable to all $w \in P(i+2)$. Suppose $P(i+2)$ is non-empty (we are vacuously done otherwise).  There is an all-seeing vertex $v' \in P(i+1)$ by the assumption. This means $v'$ is comparable to all $w \in P(i+2)$. However, we also have $v' > v$, so $v$ is comparable to all of $P(i+2)$ as well. 
\end{proof}

We are now equipped to give our main result of this section. The idea is that since there is at least one all-seeing vertex on every level, these vertices form a ``skeleton'' of the poset; furthermore, each vertex not of this form belongs to a unique ``slice'' that spans two adjacent ranks. These two problems can be addressed independently; see Figure~\ref{fig:gis-slices} for intuition. In this and future figures, a flat face in the up (resp.\ down) direction denotes up-seeing (resp down-seeing). Thus, all-seeing vertices are squares, triangles with bottom edge flat are down-seeing, etc.

\begin{figure}
\begin{center}
\begin{tikzpicture}[scale=0.1]
  \node[all] (a) at (-10,0) {};
  \node[down] (b) at (10,0) {};
  \node[all] (i) at (-20,15) {};
  \node[all] (c) at (-10, 15) {};
  \node[down] (d) at (0, 15) {};
  \node[up] (e) at (10, 15) {};
  \node[all] (f) at (-10, 30) {};
  \node[up] (g) at (0, 30) {};
  \node[up] (h) at (10, 30) {};
  \path {
    (c) edge (a)
        edge (b)
        edge (f)
        edge (g)
    (d) edge (a)
        edge (b)
        edge (f)
    (e) edge (f)
        edge (g)
        edge (a)
    (h) edge (c)
        edge (e)
    (i) edge (f)
        edge (g)
        edge (h)
        edge (a)
        edge (b)
    };
  \path [draw=none, fill=gray, semitransparent] {
    (-13,-3) -- (-23, 12) -- (-23, 18) -- (-13,33) -- (-7, 33) -- (-7, -3)};  
  \path [draw=black] {
    (-4,12) -- (-4,33) -- (15, 33) -- (3, 12) -- (-4, 12)};
  \path [draw=black] {
    (6,-3) -- (6,18) -- (14, 18) -- (14, -3) -- (6,-3)};
\end{tikzpicture}
\end{center}
\caption{A height-$k$ graded semiorder can be decomposed into a ``skeleton'' of all-seeing vertices (highlighted in gray) plus $k$ sets of faceoffs (outlined in black).   \label{fig:gis-slices}}
\end{figure}
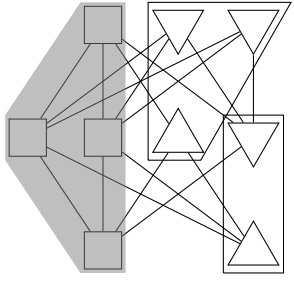

\begin{thm}
\label{thm:graded-interval-semiorders}
The OGF for graded semiorders is
\[
O(x) = \frac{1-3x+2x^2-x^3}{(1-x)(1-3x+x^2)}.
\]
\end{thm}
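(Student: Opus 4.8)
The plan is to exploit the local structural characterization of Theorem~\ref{thm:3+1 and 2+2} to decompose a graded semiorder into a rigid ``skeleton'' of all-seeing vertices together with a collection of independent ``faceoffs,'' one per edge-level, and then to multiply the corresponding generating functions. First I would classify the vertices on each level $P(i)$ into three types: the all-seeing vertices $A_i$, the up-seeing-but-not-down-seeing vertices $U_i$, and the down-seeing-but-not-up-seeing vertices $D_i$; by condition (2) of Theorem~\ref{thm:3+1 and 2+2} these exhaust $P(i)$, and by condition (3) we have $A_i \neq \emptyset$. The key structural claim is that within the edge-level spanned by ranks $i$ and $i+1$, \emph{every} edge is forced except those between $D_i$ (below) and $U_{i+1}$ (above): the up-seeing vertices $A_i \cup U_i$ are joined to all of $P(i+1)$, the down-seeing vertices $A_{i+1} \cup D_{i+1}$ are joined to all of $P(i)$, and in particular each $D_i$-vertex is joined to all of $A_{i+1} \cup D_{i+1}$ while each $U_{i+1}$-vertex is joined to all of $A_i \cup U_i$. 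Thus the only freedom is the bipartite graph between $D_i$ and $U_{i+1}$, which I will call the \emph{faceoff} on that edge-level.

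Next I would pin down which faceoffs are legal and argue that the whole poset is reconstructed from independent data. Condition (1) (the edge-level avoids $(2[0]+2[0])$, i.e.\ up-sets form a chain) forces the neighborhoods of the $D_i$-vertices inside $U_{i+1}$ to be nested, so a faceoff is a \emph{nested} (staircase) bipartite graph; moreover each $D_i$-vertex must miss some $U_{i+1}$-vertex and vice versa, since otherwise it would be all-seeing and wrongly classified. Because a $D_i$-vertex is fully joined below and a $U_{i+1}$-vertex is fully joined above, each non-all-seeing vertex is ``interesting'' in exactly one edge-level (the boundary ranks $0$ and $M$ carry only one relevant type each), so faceoffs on distinct edge-levels use disjoint vertices and may be chosen independently, as may the counts $a_i = |A_i| \geq 1$. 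Since rank is an isomorphism invariant and the all-seeing vertices of a level are mutually exchangeable, two graded semiorders are isomorphic if and only if they share the sequence $(a_0,\dots,a_{M})$ and the same (bipartition-respecting) isomorphism types of faceoffs. I would record this as a bijection between graded semiorders with $M+1$ ranks and alternating sequences of $M+1$ level-blocks (each a positive number of all-seeing clones) and $M$ faceoffs.

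With the bijection in hand the generating-function assembly is routine. Writing $G(x)$ for the OGF of faceoffs counted by total number of vertices (the empty faceoff contributing $1$), and noting that a single level-block contributes $\tfrac{x}{1-x}$ (at least one all-seeing clone), the alternating ``path'' of level-blocks and faceoffs gives
\[
O(x) = 1 + \sum_{k \geq 1}\left(\frac{x}{1-x}\right)^{k} G(x)^{k-1} = 1 + \frac{x}{\,1 - x - x\,G(x)\,}.
\]
It remains to compute $G(x)$. I would count faceoffs with $p \geq 1$ lower and $q \geq 1$ upper vertices up to isomorphism: merging equal neighborhoods shows such a faceoff is a Young-diagram shape, and the ``misses-something'' conditions become the existence of an empty row and an empty column, i.e.\ a partition fitting in a $(p-1)\times(q-1)$ box. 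This yields $\binom{p+q-2}{p-1}$ faceoffs, so summing over $p+q=n$ gives $2^{n-2}$ for $n \geq 2$ and hence $G(x) = 1 + \sum_{n\geq 2} 2^{n-2}x^n = \frac{(1-x)^2}{1-2x}$. Substituting and simplifying $1 - x - xG(x) = \frac{(1-x)(1-3x+x^2)}{1-2x}$ then produces the claimed OGF.

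The main obstacle I anticipate is the structural and bijective core of the second paragraph: carefully verifying that all edges outside the $D_i$--$U_{i+1}$ faceoffs are genuinely forced, that each faceoff is independent of the others and of the skeleton (with correct handling of the boundary ranks), and---most delicately---that the isomorphism type of an \emph{unlabeled} graded semiorder factors exactly as the level-sizes together with the faceoff shapes, so that no over- or under-counting occurs. The faceoff enumeration (the box-partition count giving $2^{n-2}$) and the final generating-function algebra are comparatively mechanical.
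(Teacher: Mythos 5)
Your proposal is correct and follows essentially the same route as the paper: the same decomposition into an all-seeing ``skeleton'' plus one faceoff per edge-level (the free bipartite graph between down-only vertices below and up-only vertices above), the same Young-diagram count $\binom{p+q-2}{p-1}$ giving $G(x)=\frac{(1-x)^2}{1-2x}$, and the same geometric-series assembly. If anything, your explicit listing of which edges are forced and the box-partition formulation of the ``misses-something'' conditions is slightly more careful than the paper's own write-up, which flags but does not fully elaborate the uniqueness of the vertex identifications.
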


\begin{proof}

Suppose $P$ has height $k-1$. By Theorem~\ref{thm:3+1 and 2+2}, every level has at least one all-seeing vertex. All other vertices are either just down-seeing or just up-seeing. Define the \emph{faceoff} $T(i)$ be the induced subposet of just down-seeing vertices on $P(i)$ and just up-seeing vertices on $P(i-1)$. Note that the faceoff can be empty, but for each $T(i)$, having a non-zero number of vertices  on one side immediately implies having a nonzero number on the other.

We claim that an unlabeled $P$ is uniquely determined by a nonzero number of all-seeing vertices for each level $P(i)$, $i \leq k-1$, and a choice of a $(2[0] + 2[0])$- avoiding faceoff $T(i)$ for each $1 \leq i \leq k-1$, with no dependency between these choices. By definition, this decomposition is unique given $P$, so it suffices to show that any set of such choices gives a $(3+1)$- and $(2+2)$- avoiding poset. The second and third items in Theorem~\ref{thm:3+1 and 2+2} are immediate. Since the faceoffs are $(2+2)$-avoiding themselves, the first item is also true since no all-seeing vertex can contribute to a $(2+2)$. The main subtlety is that when we select faceoffs for $T(i)$ and $T(i+1)$, we are also implicitly identifying vertices. We must also show that this identification is unique. 

Given this characterization, if we let $T(x)$ be the OGF for faceoffs, it is easy to see that the generating function for graded semiorders of height $k-1$ must be of form $O_k(x) = (\frac{x}{1-x})^k T(x)^{k-1}$, where the first term chooses a nonzero number of all-seeing vertices on each level, and the second term chooses a faceoff for each edge-level; i.e. height-$2$ \textbf{weakly}-graded $(2[0] + 2[0])$-avoiding posets with no all-seeing vertices (these can have isolated vertices, since in $P$ they are actually connected to the all-seeing vertices on the other side!). 

What is $T(x)$? Besides the empty faceoff, we must have at least $2$ vertices, with the rule that no vertex on the bottom is connected to every vertex above and the neighbors of the bottom vertices form a chain. To count the number of faceoffs with $m$ vertices on top and $n$ on the bottom, sort both the top and bottom vertices by degree. Then a vertex on top of degree $d$ must be matched with the $d$ vertices of highest degree on the bottom, and vice-versa; this gives a bijection to Young diagrams of height $m$ and width $n$, giving ${m-1 + n-1 \choose n-1}$ ways. So, we have
\[
T(x) = 1 + \sum_{m\geq 1,n \geq 1} {m-1 + n-1 \choose n-1} x^{m+n} = 1 + x^2 \sum_k 2^kx^k = \frac{(1-x)^2}{(1-2x)}.
\]

Thus, setting $z = \frac{x}{1-x}$, we obtain
\begin{align*}
O(x) & = \sum_{k \geq 0} (z)^k T(x)^{k-1} \\
& = 1 + \frac{z}{1 - zT(x)} \\
& = 1 + \frac{z}{(1 - \frac{x(1-x)}{1-2x})} \\
& = 1 + \frac{z(1-2x)}{1-3x+x^2} \\
& = \frac{x(1-2x) + (1-3x+x^2)(1-x)}{(1-x)(1-3x+x^2)} \\
& = \frac{1 - 3x + 2x^2 - x^3}{(1-x)(1-3x+x^2)}. \qedhere
\end{align*} 
\end{proof}

Theorem \ref{thm:seeds} applies since we have $(2+2)-$ avoidance, so the other generating functions are immediate; the triplet of related generating functions can be found in Figure~\ref{fig:trictionary-graded-interval-semiorders}. Our enumeration of graded semiorders seems to be original (Hu \cite{hu} also enumerates these objects, but in a different way that gives different insights; in particular, she gives a bijection with trees). It is interesting to note that the generating function we obtain also enumerates directed column-convex polyominoes \cite{deutsch}.  Since we have $(2+2)$-avoidance, Proposition~\ref{prop:seeds} applies; looking at the seed EGF suggests some relationship with Fibonacci numbers, as the generating function is somewhat similar to the classic generating function $\frac{1}{1-x-x^2}$. Indeed, the OEIS \cite{oeis} notes that $o_n(x) = F_{2n}+1$, where $F_n$ is the $n$-th Fibonacci number. A direct bijective proof of this fact may be enlightening.

\begin{figure}
\begin{center}
\begin{tikzpicture}[node distance=2cm, auto,]
\node (dummy) {};
\node[punkt, right=of dummy] (e) {$e_n$: labeled graded semiorders};
\node[punkt, left=of dummy] (o) {$o_n$: directed column-convex polyominos (with conditions)}
  edge[pil] (e.west);
\node[punkt, above=of dummy] (s) {$s_n$: variation of Fibonacci numbers }
  edge[pil] (o.north)
  edge[pil] (e.north);
\end{tikzpicture}

\medskip
\begin{tabular}{|l|l|}
\hline
$s_n = A000045(n+1) (n!): 1, 1, 2, 6, 48, 360, \ldots$ &
$S(x) = 1 + x + \frac{x^2}{1-x-x^2}$ \\
\hline
$o_n = A055588(n-1): 1, 1, 2, 4, 9, 22, 56, 145, \ldots$ &
$O(x) = \frac{1-3x+2x^2-x^3}{(1-x)(1-3x+x^2)}$ \\
\hline
$e_n = \textbf{???}: 1, 1, 3, 13, 99, 1021, 12723, \ldots$ &
$E(x) = S(e^x-1)$ \\
\hline
\end{tabular}

\end{center}
\caption{The three generating functions that come up from graded semiorders. The bottom sequence does not appear in the OEIS. \label{fig:trictionary-graded-interval-semiorders}}
\end{figure}
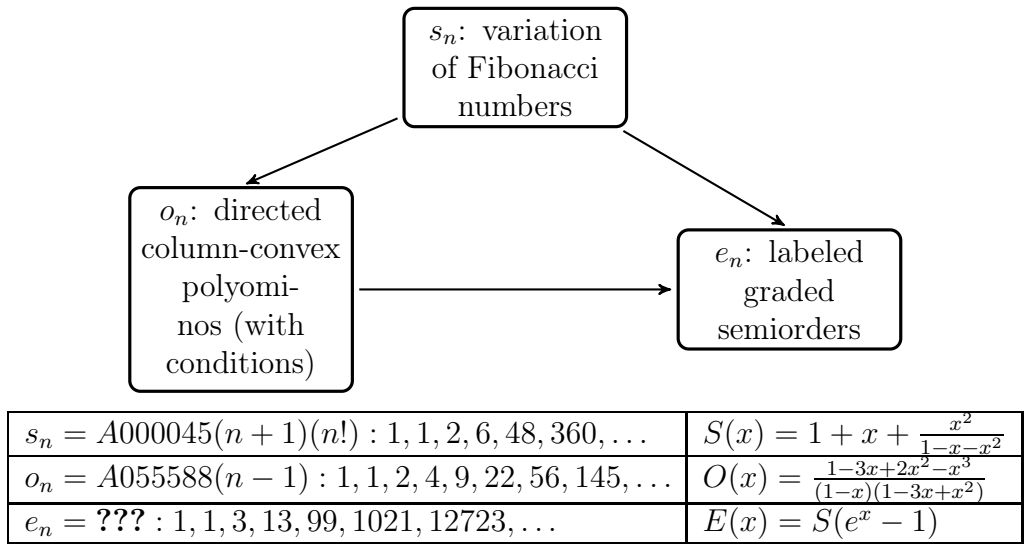

\section{$(2+2)$-Avoiding Graded Posets}
\label{sec:(2+2)}

We now attack graded interval orders, i.e. $(2+2)$-avoiding graded posets. Even though we no longer have $(3+1)$-avoidance when counting $(2+2)$-avoiding graded posets, we still have  Lemma~\ref{lem:2+2 locality} and the transfer-matrix method technique from Section~\ref{sec:all-graded}. We combine these two ideas to count $(2+2)$-avoiding graded posets, with a slightly different $G$ and very different transfer-matrix.

Consider a $(2+2)$-avoiding graded poset $P$ of height $k$.  When we counted all graded posets, we built $P$ by starting with a set of vertices on $P(0)$ (a height-$1$ poset) and iteratively adding a set of vertices and the relevant edges one layer at a time. We only needed to know how many vertices were on $P(i)$ to know how many ways are there to add $P(i+1)$ and the edges in the edge-level induced by $P(i) \cup P(i+1)$. This inspired us to make a graph $G$ whose vertices corresponds to natural numbers $\{1, 2, 3, \ldots \}$, and we counted height-$n$ graded posets by length-$(n-1)$ walks in $G$. For $(2+2)$-avoiding posets this information is no longer sufficient, because we need at least one all-seeing vertex in each row by Lemma~\ref{lem:2+2 locality} and thus we need to know which vertices in the previous level were down-seeing. 

To incorporate this information, we do a few things differently. 
\begin{compactenum}
\item We adjust $V(G)$ to be indexed by \textbf{nonnegative} integers $\{0, 1, 2, \ldots \}$; and their meaning will be different -- instead of being used to denote the number of vertices on the current top level of $P$, they now denote the number of non-down-seeing vertices on the top level (this is also why we allow $0$).
\item As we did in Section~\ref{sec:(3+1)+(2+2)}, we strip away all all-seeing vertices from $P$ and account for them separately. This does not affect $(2[0] + 2[0])$-avoidance, and as long as we know to add back at least one all-seeing vertex on each level, we will in addition get $(2[0] + 2[1])$-avoidance thanks to Lemma~\ref{lem:2+2 locality}. This results in a \textbf{weakly}-graded poset $Q$ (much like in Section~\ref{sec:(3+1)+(2+2)}, when we remove all-seeing vertices we may get isolated vertices on adjacent levels). Once we construct $Q$, we can construct an infinite family of $P$'s by adding at any nonzero number of all-seeing vertices to each level of $Q$.
\end{compactenum}

We now construct a revised iterative procedure to construct $Q$:
\begin{framed}
\begin{alg}
\label{alg:2+2}
Starting at level $i = 0$:
\begin{compactenum}[1]
\item We already have $m$ non-down-seeing vertices in $Q(i)$ (for the first step, we have $m = 0$).
\item Select $l \geq 0$ to be the number of down-seeing but not up-seeing vertices in $Q(i)$ (as we already know the number of non-down-seeing vertices on this level, it is at this step that we finally know many vertices are in $Q(i)$ altogether).
\item Select $n \geq 0$ to be the number of non-down-seeing vertices on level $i+1$ (this $n$ will become $m$ for the next iteration of the algorithm).
\item Choose the edges to exist between the $l + m$ vertices in $Q(i)$ and the $n$ vertices in $Q(i+1)$, such that these vertices and edges create a $(2[0] + 2[0])$-avoiding edge-level and do not create any all-seeing vertices.
\item Increment $i$ by $1$, and go back to step $1$, unless $i$ is now $k-1$, in which case we stop. 
\end{compactenum}
\end{alg}
\end{framed}

See Figure~\ref{fig:2+2 P and Q} for intuition on $Q$ and our algorithm. We use circles to denote vertices which are neither up- nor down-seeing, which are now possible for us.
\begin{figure}
\begin{center}
\begin{tikzpicture}[scale=0.1]
  \node[all] (a) at (-10,0) {};
  \node[down] (b) at (10,0) {};
  \node[down] (l) at (20, 0) {};
  \node[all] (i) at (-20,15) {};
  \node[all] (c) at (-10, 15) {};
  \node[down] (d) at (0, 15) {};
  \node[none] (j) at (10, 15) {};
  \node[none] (k) at (20, 15) {};
  \node[up] (e) at (30, 15) {};
  \node[all] (f) at (-10, 30) {};
  \node[up] (g) at (0, 30) {};
  \node[up] (h) at (10, 30) {};
  \path {
    (c) edge (a)
        edge (b)
        edge (f)
        edge (g)
    (d) edge (a)
        edge (b)
        edge (f)
    (e) edge (f)
        edge (g)
        edge (a)
    (h) edge (c)
        edge (e)
    (i) edge (f)
        edge (g)
        edge (h)
        edge (a)
        edge (b)
    (j) edge (a)
        edge (b)
        edge (f)
    (k) edge (a)
        edge (f)
        edge (b)
    (l) edge (i)
        edge (c)
        edge (d)
    };

  \path [draw=none, fill=gray, semitransparent] {
    (-13,-3) -- (-23, 12) -- (-23, 18) -- (-13,33) -- (-7, 33) -- (-7, -3)};
  \path [draw=black] {
    (-4,12) -- (-4,33) -- (15, 33) -- (25, 12) -- (-4, 12)};
  \path [draw=black] {
    (6,-3) -- (6, 18) -- (35, 18) -- (25, -3) -- (6, -3)};

\end{tikzpicture}
\end{center}
\caption{A height-$k$ graded interval order $P$ can be decomposed into a ``skeleton'' of all-seeing vertices (highlighted in gray) plus a $(2[0] + 2[0])-$ avoiding weakly graded interval order $Q$. The black outlines group $Q$ into layers, each of which is constructed by a single loop of Algorithm~\ref{alg:2+2}. Note the outlines can have nontrivial intersection, as we need to separately determine the up-going and down-going edges for vertices which are neither up- nor down-seeing. \label{fig:2+2 P and Q}}
\end{figure}
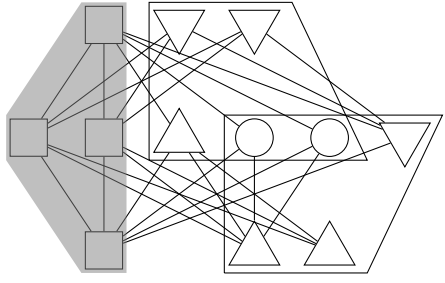

The key observation is that the edge structure is basically completely determined in step $4$; while we will add more vertices to $Q(i+1)$ in the next iteration of the loop, those vertices are all down-seeing and thus we need to make no more choices about their edges. Furthermore, adding more down-seeing vertices to $Q(i+1)$ does not change the $(2[0] + 2[0])$-avoiding status of $Q$. This, combined with the fact that we always have at least one all-seeing vertex on each level, means such a constructed $Q$ (and also the corresponding $P$'s) must be $(2+2)$-avoiding by Lemma~\ref{lem:2+2 locality}. Given any height-$n$ $(2+2)$-avoiding graded poset $P$, by first stripping away all all-seeing vertices and then by looking at which vertices are down-seeing on each level, we can reverse-engineer the choices of vertices and edges that can build such a $P$, so we have a well-defined bijection between $(2+2)$-avoiding posets of height $k$ and the outputs of this construction.

We must then carefully enumerate the choices we make in steps $2$ through $4$ of the algorithm. It is helpful to recall what we did in Section~\ref{sec:all-graded} for counting all graded posets. Suppose we had $m$ vertices on $P(i)$. To account for adding $n$ vertices on $P(i+1)$ and then choosing the edges, we multiplied by  $\psi_s(m,n) \frac{x^n}{n!}$, the $(m,n)$-entry in our matrix. Here, $\psi_s(m,n)$ was the number of strongly graded edge-levels with $m$ and $n$ vertices on the two parts, and $\frac{x^n}{n!}$ kept track of the new vertices. Also, when proving Theorem~\ref{thm:graded-interval-semiorders}, we needed to count (as faceoffs) unlabeled $(2+2)$-avoiding edge-levels, for which we gave a bijection to Young diagrams. We now count labeled $(2+2)$-avoiding edge-levels, which can be seen as a labeled analogue of Young diagrams:

\begin{lem}
\label{lem:2+2 no-all-seeing}
The number of weakly graded $(2+2)$-avoiding height-$2$ posets $Q$ on $m$ distinguishable vertices in $Q(1)$ and $n$ distinguishable vertices in $Q(0)$, with no all-seeing vertices, is \[
\psi_{2+2}(m,n) = \sum_{j = 0}^\infty  (j!)^2 S(n, j) S(m, j),
\] where $S(n,k)$ is the Stirling number of the second kind. It has the generating function \[
\Psi_{2+2}(x, y) = \sum_{m,n} \psi_{2+2}(m,n) \frac{x^m y^n}{m!n!} = \frac{1}{e^x + e^y - e^{x+y}}.
\]
\end{lem}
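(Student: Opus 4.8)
My plan is to recast the lemma as a purely combinatorial count of $0$--$1$ matrices and then exhibit a bijection with pairs of surjections. First I would translate the hypotheses. Represent the height-$2$ poset $Q$ by its bipartite adjacency matrix $M$, an $n \times m$ array whose rows are the (labeled) vertices of $Q(0)$ and whose columns are the (labeled) vertices of $Q(1)$, with $M_{ic}=1$ exactly when the edge is present. By Lemma~\ref{lem:2+2 locality}(1) applied to this single edge-level (equivalently Lemma~\ref{lem:locality} in the height-$2$ case, where only $(2[0]+2[0])$ can occur), $(2+2)$-avoidance is equivalent to the up-sets of $Q(0)$ forming a chain, i.e.\ the rows of $M$ are nested under inclusion (equivalently, so are the columns). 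In a height-$2$ poset every bottom vertex is vacuously down-seeing and every top vertex vacuously up-seeing, so a vertex is all-seeing iff its row is all-ones (bottom) or its column is all-ones (top). Thus $\psi_{2+2}(m,n)$ counts nested $0$--$1$ matrices with no all-ones row and no all-ones column.

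Next I would set up the bijection with $\bigsqcup_{j\ge 0}\mathrm{Surj}([n],[j])\times\mathrm{Surj}([m],[j])$, where a surjection onto $[j]$ is the same data as an ordered set partition into $j$ nonempty blocks (there are $j!\,S(\cdot,j)$ of them). Given such a matrix, group the rows by their support and the columns by their support; since the rows and columns are nested, the distinct row supports form a chain $\emptyset=\Sigma_0\subsetneq\cdots\subsetneq\Sigma_{j-1}$ and likewise the column supports. Record $\sigma(i)=$ rank of $\mathrm{supp}(i)$ in this chain and $\tau(c)=$ rank of $\mathrm{supp}(c)$; these are surjections onto a common index set. The inverse map reconstructs $M$ from any pair of surjections $\sigma,\tau$ onto $\{0,\dots,j-1\}$ by the threshold rule $M_{ic}=1\iff\sigma(i)+\tau(c)\ge j$; one checks directly that this always yields nested rows, and that the no-all-ones-row and no-all-ones-column conditions correspond precisely to the surjectivity of $\tau$ and of $\sigma$ (surjectivity forces a rank-$0$, i.e.\ empty, support block on each side). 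Summing over $j$ then gives $\psi_{2+2}(m,n)=\sum_j (j!\,S(n,j))(j!\,S(m,j))=\sum_j (j!)^2 S(n,j)S(m,j)$.

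The hard part will be verifying that the two constructions are mutually inverse, and in particular that the number of distinct row supports equals the number of distinct column supports (so that a single common $j$ really indexes the pair) and that the canonical ranking obeys the clean additive threshold $\sigma(i)+\tau(c)\ge j$ rather than some more general staircase relation. I would handle this by sorting rows and columns by support to put $M$ in Ferrers/staircase form and arguing that each column support $\Gamma_t$ is an up-set in the row-ranking, say $\Gamma_t=\{i:\sigma(i)\ge f(t)\}$ for a strictly decreasing integer function $f$; the no-all-ones conditions pin down $f(0)=j$ and $f(j-1)=1$, and strict monotonicity then forces both the equality of the two counts of distinct supports and $f(t)=j-t$, which is exactly the threshold rule.

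Finally, the generating function is a routine consequence. Substituting the closed form and interchanging summation, $\Psi_{2+2}(x,y)=\sum_j (j!)^2\big(\sum_m S(m,j)\tfrac{x^m}{m!}\big)\big(\sum_n S(n,j)\tfrac{y^n}{n!}\big)$; using the standard identity $\sum_n S(n,j)\tfrac{t^n}{n!}=\tfrac{(e^t-1)^j}{j!}$ this collapses to $\sum_j (e^x-1)^j(e^y-1)^j=\tfrac{1}{1-(e^x-1)(e^y-1)}$, and expanding $(e^x-1)(e^y-1)=e^{x+y}-e^x-e^y+1$ gives the claimed $\tfrac{1}{e^x+e^y-e^{x+y}}$.
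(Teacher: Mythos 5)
Your proposal is correct and follows essentially the same route as the paper: reduce $(2+2)$-avoidance on a single edge-level to the up-sets forming a chain under inclusion, observe that the no-all-seeing condition forces the chain to start at $\emptyset$ and to omit the full top level, encode the data as a pair of ordered set partitions (surjections) onto a common $[j]$, and finish with the standard Stirling-number EGF identity. The only difference is packaging: the paper picks the chain of up-set values first and then surjectively assigns bottom vertices to them, whereas your symmetric threshold rule $\sigma(i)+\tau(c)\ge j$ requires the extra (but, as you correctly anticipate, routine) verification that the two support chains have equal length and that the ranking is exactly additive.
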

\begin{proof}
Here, $(2+2)$-avoidance is equivalent to $(2[0] + 2[0])$-avoidance. Recall that $(2[0] + 2[0])$-avoidance is equivalent to the up-sets of $Q(0)$ forming a chain under inclusion. Suppose the up-sets of $Q(0)$ take on exactly $j$ distinct values $S_1 \subset S_2 \subset \cdots \subset S_j$. Note that $S_j \neq Q(1)$ is equivalent to $Q(0)$ having no up-seeing vertex, and $S_1 = \emptyset$ is equivalent to $Q(1)$ having no down-seeing vertex. Thus, the number of ways to pick the values of all the $S_i$ is exactly the number of ways to partition $Q(1)$ into an ordered list of $j$ nonempty sets $T_1 \cup T_2 \cup \cdots \cup T_j$, as we can then define $S_i = T_1 \cup T_2 \cup \cdots \cup T_{i-1}$, with $S_1 = \emptyset$, $S_j = T_1 \cup \cdots \cup T_{j-1}$, and $T_j$ of the elements of $Q(1)$ not belonging to any $S_i$. There are $(j!) S(n, j)$ ways to make this partition. After the values of $S_i$ are fixed, it is an independent problem to assign the up-sets of $Q(0)$ to these values. We need each value to be hit at least once, so this is, familiarly, a partition of $Q(0)$ into an ordered list of $j$ nonempty sets.  There are $(j!) S(l + m', j)$ ways to achieve this. Summing gives our formula. Note that we have $\psi_{2+2}(0,0) = 1$ as $S(0,0) = 1$.

For the generating function, we make the computation

\begin{align*}
\Psi_{2+2}(x, y) = & \sum_{m,n} \psi_{2+2}(m,n) \frac{x^m y^n}{m!n!} \\
= & \sum_{m,n} \sum_{j}   S(n, j) S(m, j) \frac{x^m y^n}{m!n!} \\
= & \sum_j (j!)^2 (\sum_m S(m,j) \frac{x^m}{m!}) (\sum_n S(n,j) \frac{y^n}{n!}) \\
= & \sum_j (j!)^2 \frac{(e^x - 1)^j}{j!} \frac{(e^y-1)^j}{j!} \\
= & \sum_j [(e^x - 1)(e^y-1)]^j \\ 
= & \frac{1}{1 - 1 + e^x + e^y - e^{x+y}} \\
= & \frac{1}{e^x + e^y - e^{x+y}},
\end{align*}
where in the fourth line we used the generating function for Stirling numbers of the second kind
\[
\sum_m S(m,j) x^m/m! = (e^x-1)^j/j!. \qedhere
\]
\end{proof}

Again, taking the liberty of treating $\psi_{2+2}(m,n)$ as ``simple'' as we did with $\psi_S(m,n)$, we can use the transfer-matrix method to achieve the following result similar to Theorem~\ref{thm:all-graded}. 

\begin{thm}
\label{thm:graded-interval-orders}
Let $V$ be the infinite column vector $\begin{bmatrix} 1 & 0 & 0  & \cdots \end{bmatrix}^T$. Let $A$ be the infinite matrix with rows and columns indexed by $i, j \geq 0$, such that 
\[
A_{mn} = A(m, n) = \sum_{l = 0}^\infty \frac{x^{l + n}}{l!n!} \sum_{m' = 0}^m {m \choose m'} \psi_{2+2}(n, l+m').
\]
Then the EGF for $p_{n, k}$, the number of graded interval orders of size $n$ and height $k$, is
\[
P_{k}(x) = \sum_n p_{n,k} x^n/n! = (e^x-1)^k [V^T A^kV] = (e^x - 1)^k [A^k]_{0,0}.
\]
The EGF for graded interval orders of size $n$ is 
\[
P(x) = (e^x - 1) [V^T (I - (e^x - 1)A)^{-1} V] = (e^x - 1) [(I - (e^x - 1)A)^{-1}]_{0,0}.
\]
\end{thm}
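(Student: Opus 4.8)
The plan is to run the transfer-matrix machinery of Theorem~\ref{thm:all-graded}, adapted to the two structural facts furnished by Lemma~\ref{lem:2+2 locality}: that $(2+2)$-avoidance of a graded poset is equivalent to $(2[0]+2[0])$-avoidance together with the presence of an all-seeing vertex on every rank. First I would peel off the skeleton. Given a graded interval order $P$ of height $k$, deleting all of its all-seeing vertices leaves a \emph{weakly} graded, $(2[0]+2[0])$-avoiding poset $Q$ on the same $k$ ranks (now possibly with empty ranks, and with isolated vertices, since a deleted vertex may have been the only neighbour of a retained one). As in the proof of Theorem~\ref{thm:3+1 and 2+2}, one checks that reinserting a nonempty labeled set of all-seeing vertices on each rank of any such $Q$ returns a bona fide $(2+2)$-avoiding graded poset, and that this is the inverse of the peeling; hence $P \mapsto (Q,\,\text{skeleton})$ is a bijection. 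On exponential generating functions the skeleton contributes one factor $e^x-1$ per rank, i.e.\ $(e^x-1)^k$, so it remains to compute the EGF of the labeled height-$k$ posets $Q$.

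For that I would build a transfer matrix whose state on rank $i$ is the number $m$ of \emph{non-down-seeing} vertices of $Q(i)$ --- precisely the vertices of $Q(i)$ whose upward edges have not yet been committed. Rank $0$ has $m=0$, since every rank-$0$ vertex is vacuously down-seeing (its non-all-seeing vertices were all removed into the skeleton), which is why the initial vector is $V=\begin{bmatrix}1 & 0 & 0 & \cdots\end{bmatrix}^T$. Following Algorithm~\ref{alg:2+2}, one transition from state $m$ to state $n$ adjoins $l$ new down-seeing-but-not-up-seeing vertices to $Q(i)$ and $n$ new non-down-seeing vertices to $Q(i+1)$, recorded by the label factor $x^{l+n}/(l!\,n!)$, and then commits the upward edges out of all $l+m$ vertices of $Q(i)$. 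The heart of the matter is to show that the number of admissible edge patterns is $\sum_{m'=0}^{m}\binom{m}{m'}\psi_{2+2}(n,l+m')$, so that the transfer entry is exactly $A(m,n)$.

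To see this I would argue along the lines of Figure~\ref{fig:2+2 P and Q}. A lower vertex becomes up-seeing exactly when it is joined to all $n$ new upper vertices, since it is automatically joined to the down-seeing vertices of $Q(i+1)$ arriving at the next step (those see everything below). The $l$ freshly added down-seeing vertices may \textbf{not} be up-seeing, or they would become all-seeing; so they, together with a chosen subset of $m'$ of the $m$ old non-down-seeing vertices, are the $l+m'$ lower vertices that stay non-up-seeing, while the complementary $m-m'$ old vertices are declared up-seeing (their edges to all $n$ upper vertices thereby forced), the choice of subset accounting for $\binom{m}{m'}$. On the $l+m'$ non-up-seeing lower vertices and the $n$ upper vertices, $(2[0]+2[0])$-avoidance together with the requirement that the new upper vertices remain non-down-seeing are exactly the hypotheses of Lemma~\ref{lem:2+2 no-all-seeing}, contributing $\psi_{2+2}(n,l+m')$; the $m-m'$ up-seeing vertices sit at the top of the inclusion chain of up-sets and create no all-seeing vertex because they are non-down-seeing. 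Since the seeing-status of each rank is settled afresh at its own transition, the process is Markovian, and a height-$k$ poset $Q$ corresponds to a length-$k$ walk $0 \to m_1 \to \cdots \to m_{k-1} \to 0$, the last step being the trivial ``closing'' transition off the top rank (valid because $A(m,0)=1$, which is forced by $\psi_{2+2}(0,j)=[\,j=0\,]$). Summing the step weights gives $[A^k]_{0,0}$, whence $P_k(x)=(e^x-1)^k[A^k]_{0,0}$; summing the geometric series $\sum_{k\ge 1}\big((e^x-1)A\big)^k$ in its $(0,0)$ entry then produces a closed form in terms of the inverse $(I-(e^x-1)A)^{-1}$. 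Convergence is coordinatewise as a formal power series, since each entry of $(e^x-1)A$ is divisible by $x$ and high-index states cost high degree, exactly as in Theorem~\ref{thm:all-graded}.

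I expect the main obstacle to be the justification of the entry $A(m,n)$ in the previous paragraph. One must verify that folding the up-seeing lower vertices out of the $\psi_{2+2}$-count neither disturbs the inclusion-chain condition nor manufactures an all-seeing vertex, and that the down-seeing vertices of $Q(i+1)$ --- introduced only at the following step and adjacent to all of $Q(i)$ --- can neither retroactively make an earlier vertex all-seeing nor violate non-down-seeingness of the upper vertices. Establishing that these choices are genuinely independent across ranks (so that the walk factors as a matrix product, with labels distributed by the EGF product rule) and that the peel/reinsert correspondence is a true bijection even when $Q$ has empty ranks is the real content; granting that, the remaining algebra is the routine resummation already carried out in Theorem~\ref{thm:all-graded}.
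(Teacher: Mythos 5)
Your proposal is correct and follows essentially the same route as the paper: strip the all-seeing skeleton (contributing $(e^x-1)^k$), run a transfer matrix whose state is the number of non-down-seeing vertices on a rank, justify the entry $\sum_{m'}\binom{m}{m'}\psi_{2+2}(n,l+m')$ exactly as in Algorithm~\ref{alg:2+2}, and use $A(m,0)=1$ (equivalently $\mathbf{1}=AV$) to close the walk and symmetrize to $V^TA^kV$. The points you flag as needing verification are the same ones the paper's own discussion preceding the theorem addresses, so nothing is missing.
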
 
\begin{proof}

The idea is basically the same as the proof of Theorem~\ref{thm:all-graded}, with a couple of differences. First, when counting all graded posets we could start with any number of vertices on the first row. But when counting graded interval orders we must have $0$ non-down-seeing vertices on the first row, so the initial row vector $V$ is different.

When we use the transfer-matrix method to count possible constructions of $Q$, suppose we have $m$ non-down-seeing vertices on $Q(i)$ and wish to have $n$ non-down-seeing vertices on $Q(i+1)$. Suppose exactly $m'$ of the $m$ non-down-seeing vertices in $Q(i)$ are non-up-seeing. There are ${m \choose m'}$ ways to choose this set. We now have exactly $l + m'$ vertices in $Q(i)$ that are non-up-seeing and $n$ vertices in $Q(i+1)$ that are non-down-seeing.  Lemma~\ref{lem:2+2 no-all-seeing} allows us to compute $\psi_{2+2}(n, l+m')$. 

Note it is completely legal to have no non-down-seeing vertices to $Q(i+1)$, as long as we have no non-up-seeing vertices on $Q(i)$; this happens in the sum when we pick $l = 0$ and $m' = 0$, in which case we get a contribution of $\psi_{2+2}(0,0)$. This is why have the summation index in its definition start at $0$ instead of $1$, since we conveniently obtain $\psi_{2+2}(0,0) = 1$. We now obtain:
\begin{align*}
P_{k}(x) & = \begin{bmatrix}1 & 0 & 0 & \cdots \end{bmatrix} \cdot 
A^{k-1}
 \cdot 
\begin{bmatrix}1 \\ 1 \\ 1 \\ \vdots \end{bmatrix} (e^x - 1)^{k}.
\end{align*}
The reason we need to multiply by $(e^x - 1)^{k}$ at the end is because we must add at least one all-seeing vertex to each level to make $P$ from $Q$, corresponding to multiplication by $(x + x^2/2! + x^3/3! + \cdots) = e^x-1$  for each of the $k+1$ levels. 

A slight simplification that adds some symmetry is to notice that $A(m, 0) = 1$ for all $m$. This is because we need both $l$ and $m'$ to be zero for the $\psi_{2+2}$ term to be nonzero in $A(m,n)$. Thus, the all-ones column vector can also be thought of as $AV$. This means we can rewrite
\[
P_k(x) = V^T A^k V (e^x-1)^k
\]
as desired. The rest is routine algebra.
\end{proof}

Using this, we can fairly quickly obtain terms for the EGF, even though we do not have a nice closed-form expression like in the previous section. We can then apply Theorem~\ref{thm:seeds} to obtain the OGF and the seed EGF for free. See Figure~\ref{fig:trictionary-graded-interval-orders} for the result. Sadly, these sequences do not seem to be in the OEIS, even though many extremely similar impostors lurk there. Maybe a keen reader could find insight we did not!  It is instructive to compare with graded semiorders; for example, it is easy to check by hand that there is exactly one $(2+2)$-avoiding graded poset of size $6$ that contains $(3+1)$, accounting for the first difference ($56 + 1 = 57$) at $o_6$, which in turn contributes $6! = 720$ to the first difference ($12723 + 720 = 13443$) in $e_6$ as the poset happens to be a (primitive) seed.

\begin{figure}
\begin{center}
\begin{tabular}{|l|}
\hline
$s_n = \textbf{???}: 1, 1,1,1,2,3,6,12,28,69, \ldots$  \\
\hline
$o_n = \textbf{???}: 1, 1, 2, 4, 9, 22, 57, 155, 442, \ldots$ \\
\hline
$e_n = \textbf{???}: 1, 1, 3, 13, 99, 1021, 13443, \ldots$ \\
\hline
\end{tabular}

\end{center}
\caption{The three sequences that arise from graded interval orders. \label{fig:trictionary-graded-interval-orders}}
\end{figure}

\section{$(3+1)$-Avoiding Graded Posets}
\label{sec:(3+1)}

$(3+1)$-avoidance is ubiquitous in the study of poset-avoidance, including the Stanley-Stembridge conjecture \cite{stanley-stembridge}, the birthday problem \cite{fadnavis}, etc. When we started this project, the enumeration of $(3+1)$-avoiding posets had been a long-standing unsolved problem, so we thought to explore the graded version as a stepping stone. We successfully enumerated $(3+1)$-avoiding graded posets in a standalone work joint with Lewis \cite{lewis-zhang}, with the following main result:

\begin{thm}[\cite{lewis-zhang}]
The EGF for graded $(3+1)$-avoiding posets is:
\[
F(x) = e^x - 1 + \frac{2e^{x} + (e^x - 2)\Psi_w(x, x)}{2e^{2x} + e^x + (e^{2x} - 2e^x - 1)\Psi_w(x, x)}.
\] 
\end{thm}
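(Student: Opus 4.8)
My plan is to follow the transfer-matrix philosophy of Theorem~\ref{thm:all-graded} and Theorem~\ref{thm:graded-interval-orders}, building a $(3+1)$-avoiding graded poset one rank at a time while tracking just enough interface data to certify the local conditions of Lemma~\ref{lem:3+1 locality}. First I would peel off the height-$1$ case: a single antichain on $n \geq 1$ vertices is vacuously $(3+1)$-avoiding and contributes exactly the summand $e^x - 1$, so all the real work lies in the rational piece, which enumerates posets of height $\geq 2$.

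For height $\geq 2$, I would reformulate the two conditions of Lemma~\ref{lem:3+1 locality} in terms of the $0$--$1$ adjacency matrices $M_i$ of the successive edge-levels. Condition (1) says every vertex is up-seeing or down-seeing, i.e.\ in each $M_i$ the row of every non-up-seeing vertex and the column of every non-down-seeing vertex contains a zero; hence the only genuinely free entries of $M_i$ sit between the down-only vertices of rank $i$ and the up-only vertices of rank $i+1$. Crucially, because we do \emph{not} impose $(2+2)$-avoidance, this free block is an unconstrained bipartite graph, with none of the chain-of-up-sets restriction of Lemma~\ref{lem:2+2 locality}. This is why I expect $\Psi_w$ to be the building block here, rather than the $\Psi_s$ preceding Theorem~\ref{thm:all-graded} or the $\Psi_{2+2}$ of Lemma~\ref{lem:2+2 no-all-seeing}: each gap between consecutive ranks should contribute a factor counting an arbitrary bipartite graph between the down-only vertices below and the up-only vertices above, both classes weighted by $x$, hence the argument $\Psi_w(x,x)$.

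Condition (2) is the new ingredient and the crux of the argument. Writing it as the Boolean product identity $M_i M_{i+1} = J$ with $J$ the all-ones matrix, I would show that on any triple of consecutive nonempty ranks it collapses to a clean trichotomy: if the middle rank carries an all-seeing vertex the condition is automatic, and otherwise it holds exactly when either every vertex of rank $i$ has an up-neighbor among the up-only vertices of rank $i+1$, or every vertex of rank $i+2$ has a down-neighbor among the down-only vertices of rank $i+1$. Encoding this disjunction, together with the ``at least one all-seeing vertex may always be adjoined to each level'' bookkeeping seen in Theorem~\ref{thm:graded-interval-orders}, into a \emph{finite} transfer-matrix state is where the difficulty concentrates: I would track, for the current top rank, its partition into up-only, down-only, and all-seeing vertices and whether the pertinent covering condition has been met, then fold the ``not up-seeing / not down-seeing / covered'' constraints into the free $\Psi_w$ blocks by inclusion--exclusion, exactly as the passage $\Psi_s = e^{-x-y}\Psi_w$ does in Section~\ref{sec:all-graded}.

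Finally, I would sum the resulting matrix geometric series over all heights, an $(I-M)^{-1}$-type computation identical in spirit to Theorems~\ref{thm:all-graded} and~\ref{thm:graded-interval-orders}, and simplify. Once the trichotomy above is exploited the state space collapses to a small size, so the inverse is a ratio of low-degree polynomials in $e^x$ and $\Psi_w(x,x)$; adding back the height-$1$ term $e^x - 1$ should reassemble the stated $F(x)$. The main obstacle, as flagged, is corralling the disjunctive form of Condition (2) into a genuinely finite-state transfer matrix and carrying out the inclusion--exclusion so that the free edge-levels are counted by the unadorned $\Psi_w(x,x)$; the concluding algebra is then routine. The full execution appears in the companion work \cite{lewis-zhang}.
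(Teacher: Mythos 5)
Your sketch assembles exactly the ingredients the paper itself lists for this result --- the transfer-matrix method, stripping out a ``skeleton'' of all-seeing vertices with tooth-like height-$2$ pieces left over, and free bipartite blocks counted by $\Psi_w$ --- and your reduction of condition (2) of Lemma~\ref{lem:3+1 locality} to a per-triple-of-ranks disjunction is sound. Since the paper likewise gives no proof here and defers the execution to \cite{lewis-zhang}, your proposal takes essentially the same approach, with the step you flag as hard (encoding that disjunction into a finite-state transfer matrix) being precisely where the cited work does its real work.
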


We do not repeat details of this work here, but we remark that the techniques we introduced played a large role:
\begin{compactitem}
\item the main strategy was, as in all our work in this paper, the transfer-matrix method;
\item to simplify the problem, we again decomposed a $(3+1)$-avoiding graded poset into a ``skeleton'' of all-seeing vertices and objects resembling faceoffs from Section~\ref{sec:(3+1)+(2+2)}, which we affectionately called \emph{teeth};
\item note the re-appearance of $\Psi_w$, which happened because we again must explore height-$2$ building blocks of our graded poset.
\end{compactitem}

This story has a happy ending -- soon afterwards, Guay-Paquet, Morales, and Rowland \cite{guay-paquet} obtained the complete enumeration of $(3+1)$-avoiding posets, using some techniques similar to ours (as expected from the intuitions which arise from our work, $\Psi_w$ appears in their work also, albeit in a different form) and some more sophisticated techniques of their own. 

\section{Conclusion and Future Directions}
\label{sec:conclusion}

In this work, we have understood the basic structure and enumeration of the four classes of graded posets and highlighted the similarities and differences between them. We hope that the reader has gained some appreciation for some of the recurring techniques and structure. There are many fruitful directions for future work:
\begin{compactitem}
\item The real value of Theorem~\ref{thm:seeds} may lie not in its modest application to generating functions, but rather in its ability to predict relationships between seemingly unrelated objects, such as the appearance of Motzkin objects, Fibonacci numbers, and directed column-convex polyominos in Figures~\ref{fig:trictionary-examples} and \ref{fig:trictionary-graded-interval-semiorders}. We welcome the reader to explore further in this direction and embark on their own garden-gathering, of the more ambitious variety than our reframing compositions as $(2+1)$-avoiding posets!
\item While the graded semiorders have a nice generating function, our enumeration in the form of matrices for all graded posets and graded interval orders still have potential to be simplified, maybe with something like a spectral theory of infinite matrices over the ring of formal power series. We also did not explore the asymptotics; for example, do graded interval orders comprise ``most'' of interval orders? The aforementioned work of Guay-Paquet et al. \cite{guay-paquet} showed that ``most'' $(3+1)$-free posets are graded.
\item Lemma~\ref{lem:locality}, while simple, is absolutely central to our work. It may be useful to try to extend it to give a unified ``Ramsey theory of chain-avoidance,'' especially if posets besides $(2+2)$ and $(3+1)$ start to become relevant in poset-avoidance.
\item It may be useful to recast the work in Section~\ref{sec:seeds}, which is really about cycle indices at heart, in a more algebraic language, such as species or invariant theory.
\end{compactitem}

\section*{Acknowledgments}
\label{sec:ack}
The author thanks Richard Stanley for motivating the enumeration of graded posets and for ever-helpful advice, and Joel Lewis, Alejandro Morales, Francois Bergeron, and Alexander Postnikov for helpful discussion. In particular, the author is especially grateful for Joel Lewis for the collaboration that led to the $(3+1)$-avoiding graded posets work, help on intuition in the other cases, and ruthless yet generous editing. Finally, two anonymous referees for the extended abstract of this work submitted to FPSAC 2015 gave very high-quality reviews and criticism. Much of this work was done at MIT, when the author was supported via an NSF Graduate Fellowship.

\appendix

\section{Primitive Seeds and Generalizations}
\label{app:seeds}

In this section, we prove Theorem~\ref{thm:seeds} and explore its underlying ideas. There are many ways to prove this result (elementary counting / Stirling numbers would suffice), but we think that using Polya theory and symmetric functions seems to make it easiest to generalize. There are many references for this classic material; Stanley \cite{stanley-ec2} would suffice.

\begin{proof}[Proof of Theorem~\ref{thm:seeds}]
Recall that the \emph{cycle index} of a $G$-action on a set $S$ is 
\[
Z_G(x_1, x_2, \ldots) = \sum_f \prod_i x_i^{|f^{-1}(c_i)|},
\]
where we sum over $G$-equivalence classes of functions $f\colon S \rightarrow C = \{c_1, c_2, \ldots \}$ for a set $C$ of ``colors.'' Suppose we have a seed $S$ and wish to obtain the generating function for the unlabeled sprouts of $S$. A sprout of $S$ is where each element of $S$ appears as a union of some nonzero number of clones. Thus, sprouts are is in bijection with ``colors'' of the elements of $S$ by by natural numbers. In our OGF, we want an element colored by $i$ to have ``weight'' $x^i$ since it corresponds to having $i$ clones of the element in the sprout. This means our OGF can be computed by plugging in $(x, x^2, \ldots)$ to $Z_G$. For a primitive seed of size $m$, our group is trivial with the cycle index $p_1^m$. As
\[p_1(x, x^2, \ldots) = x + x^2 + x^3 + \cdots = x/(1-x),\]
We now see why substitutions may be relevant: every unlabeled primitive seed of size $m$ contributes $Z_G(x, x^2, \ldots) = (\frac{x}{1-x})^m$ to the OGF of unlabeled sprouts. However, recall that the seed OGF is equal to the seed EGF $S(x)$. This means if all of our seeds are primitive and we have $s_m$ (unlabeled) seeds of each size $m$, then the OGF of the unlabeled garden satisfies
\[
O(x) = \sum_{m} s_m Z_G(x, x^2, x^3, \ldots) = \sum_m s_m (\frac{x}{1-x})^m = S(\frac{x}{1-x}).
\]
For labeled sprouts and EGFs, we want an element colored by $i$ to have weight $x^i/i!$ instead of $x^i$, so we should instead make the substitution
\[
p_1(x, x^2/2, x^3/6, \ldots) = x + x^2/2 + \cdots = e^x-1,
\]
so the EGF of the labeled garden satisfies
\[
E(x) = \sum_m s_m Z_G(x, x^2/2, x^3/6, \ldots)/m! = \sum_m (e^x-1)^m/m! = S(e^x - 1).
\]
The remaining substitutions between these $3$ functions are an exercise in simple algebra. For example, 
\[O(1-e^{-x}) = S((1-e^{-x})/e^{-x}) = S(e^x-1) = E(x).\]
This completes the proof of Theorem~\ref{thm:seeds}. 
\end{proof}

In this proof, the key insight is that all the seeds have the same automorphism group, not that the automorphism had to be trivial. This allows for a generalization. Define the \emph{faithful part} of a seed to be the elements of the seed that are not fixed by the entire automorphism group of the seed, and \emph{fixed part} otherwise. Then we have:

\begin{thm}
\label{thm:seeds generalized}
When all seeds in a garden have faithful parts isomorphic to $K$, the garden's OGF $O(x)$, EGF $E(x)$, and seed EGF $S(x)$ satisfy $E(x) = R_K(x)S(e^x-1)$, $O(x) = R_K(x)S(x/(1-x))$, and $\frac{E(x)}{R_K(x)} = \frac{O(1-e^{-x})}{R_K(1-e^{-x})},$ for a function $R_K(x)$ that depends only on $K$.
\end{thm}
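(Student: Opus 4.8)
The plan is to mirror the proof of Theorem~\ref{thm:seeds}, but now allowing each seed a nontrivial automorphism group whose action is constrained to a common ``faithful part'' $K$. The main observation from the primitive case is that the OGF of unlabeled sprouts of a single seed equals $Z_G(x, x^2, x^3, \ldots)$ and the EGF of labeled sprouts equals $Z_G(x, x^2/2, x^3/6, \ldots)$, where $G$ is the automorphism group of the seed. So I would begin by analyzing how the cycle index $Z_G$ factors when the seed decomposes into a fixed part (on which $G$ acts trivially) and a faithful part isomorphic to $K$ (on which $G$ acts as the automorphism group of $K$).

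First I would establish that the automorphism group $G$ of any such seed splits its action: the fixed part consists of elements each forming their own singleton orbit, contributing factors of $p_1$ in the cycle index, while the faithful part contributes the cycle index $Z_K$ of the common group acting on $K$. Since cycle indices of a product action multiply, $Z_G = p_1^{\,|\text{fixed part}|} \cdot Z_K$. Substituting the relevant coloring weights, the fixed-part factor becomes $(x/(1-x))^{f}$ (OGF) or $(e^x-1)^f$ (EGF), exactly as in the primitive case, where $f$ is the size of the fixed part. The faithful-part factor, obtained by plugging the same arguments into $Z_K$, depends only on $K$; this is precisely the function I would \emph{define} as
\[
R_K(x) = Z_K(x, x^2, x^3, \ldots),
\]
with the understanding that the EGF version uses $Z_K(x, x^2/2, x^3/6, \ldots)$ instead. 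The key point to verify here is that, for the single group $K$, these two substituted cycle indices agree after the standard $x \mapsto 1-e^{-x}$ reparametrization, which is what will make the third identity in the theorem hold.

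Next I would assemble the garden-level generating functions by summing over all seeds. Writing $s_n$ for the number of unlabeled seeds of size $n$, and using that the size of a seed is (fixed part) plus $|K|$, the fixed-part contributions reproduce the $S(x/(1-x))$ and $S(e^x-1)$ structure exactly as before, while the common faithful-part factor $R_K$ pulls out of the entire sum. This yields $O(x) = R_K(x)\,S(x/(1-x))$ and $E(x) = R_K(x)\,S(e^x-1)$ directly. The third relation then follows by the same algebraic substitution used in the primitive proof: setting $x \mapsto 1-e^{-x}$ in $O$ and dividing by $R_K(1-e^{-x})$ recovers $S(e^x-1)$, hence $E(x)/R_K(x)$, giving $\frac{E(x)}{R_K(x)} = \frac{O(1-e^{-x})}{R_K(1-e^{-x})}$.

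The main obstacle I expect is the factorization of the cycle index and the clean separation of the fixed and faithful parts. I must argue that the automorphism group of a seed acts as a \emph{product} on the fixed and faithful parts in a way that makes $Z_G$ literally factor as $p_1^{f} Z_K$; this requires that no automorphism can mix fixed and faithful elements (immediate, since fixed elements are defined as those fixed by \emph{all} of $G$) and that the restriction of $G$ to the faithful part is faithful and isomorphic to $\mathrm{Aut}(K)$ as the hypothesis demands. A subtlety worth checking is whether the colorings (clone-multiplicities) on fixed versus faithful vertices interact under the $G$-action; since the two vertex sets are $G$-invariant, the orbit-counting factors through independently, so the substituted cycle index genuinely factors. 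Once this structural factorization is secured, the remainder is the same bookkeeping as the primitive case, and defining $R_K$ as the faithful-part cycle index makes all three claimed identities fall out formally.
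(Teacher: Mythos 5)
Your proposal follows essentially the same route as the paper: factor each seed's cycle index into a power of $p_1$ coming from the fixed part times a contribution depending only on the faithful part $K$, pull the latter out of the sum over seeds as $R_K$, and finish with the same substitutions and algebra as in the primitive case. The one discrepancy is normalization: the paper defines $R_K$ as $\frac{1}{|G|}\sum_{g} p'_g/p_1^{|K|}$, dividing by $p_1^{|K|}$ so that the full factor $p_1^{m}$ (with $m$ the \emph{total} seed size) can be matched against $S$, whereas your literal definition $R_K(x)=Z_K(x,x^2,\ldots)$ leaves a residual factor of $(x/(1-x))^{-|K|}$ that must be absorbed into $R_K$ for the identities $O(x)=R_K(x)S(x/(1-x))$ and $E(x)=R_K(x)S(e^x-1)$ to hold as stated --- a mechanical adjustment that does not change the argument.
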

\begin{proof}
Recall that a single unlabeled seed contributes 
\[
\frac{1}{|G|} \sum_{g \in G} p_{g}(x, x^2, x^3, \ldots)
\]
to the OGF for unlabeled sprouts by Polya theory, where $p_{g}$ is the power-sum symmetric function of $g$'s cycle structure as a permutation in $S_m$. If a seed of size $m$ has faithful part $K$ of size $k$, then it has fixed part of size $m-k$, meaning each $p_g$ in the above sum has a $p_1^{m-k}$ factor, which we can pull out to get 
\[
(p_1^{m-k}) (\frac{1}{|G|} \sum_{g \in G} p'_{g}),
\]
where $p'_g = p_g/p_1^{m-k}$ can be interpreted as the cycle index term where we treat $g$ as a permutation in $S_{k}$ instead of $s_m$. This means we can rewrite this further as 
\[
(p_1^m)(\frac{1}{|G|} \sum_{g \in G} \frac{p'_{g}}{p_1^k}) = p_1^m R_K(x),
\]
where $R_K(x)$ only depends on the faithful part $K$ and not on the rest of the seed! As an example, for primitive seeds, $K = \emptyset$ and $R_K(x) = 1$. So now, suppose we have a garden where all seeds have isomorphic faithful part $K$, with seed OGF $S(x) = \sum_m s_m x^n$. Then we similarly get that the OGF of unlabeled sprouts of this garden is just
\[
O(x) = \sum_m s_m p_1^m R_K(x) = R_K(x) S(\frac{x}{1-x}).
\]
A similar computation gives the EGF $E(x)$, and simple algebra gives the remaining identities. 
\end{proof}

While it seems unlikely that we will naturally come upon gardens whose seeds have isomorphic faithful parts, Theorem~\ref{thm:seeds generalized} suggests the heuristic of dividing our objects into classes based on faithful parts. It would be nice to see this method applied to other families of objects; we suspect it would be especially helpful when the types of automorphism groups on the objects are very constrained. Furthermore, while we wished to concentrate on the scope of posets in our work, the concept of posets was not fundamental to our above analysis. These results are very general concepts applicable to other combinatorial structures, probably best eventually put into the language of species. 

\section{An Application of Theorem~\ref{thm:seeds}: Marked Interval Orders}
\label{sec:marked}

We end with a generalization of a result of Stanley \cite{stanley-arrangements} from studying hyperplane arrangements. Consider an embedding of $n$ intervals in $\RR$ such that each interval is marked at several points. Consider two arrangements to be equivalent if for every two intervals $a$ and $b$, the number of marked points of $a$ to the left of the leftmost (not necessarily marked!) point of $b$ are the same. Call such an equivalence class a \emph{marked interval order}.  As an example, if each interval were only marked at the right endpoint, we would recover all interval orders of a particular set of lengths. 

These marks can be seen as a natural way to denote different levels of relationships between two intervals to give a finer relationship than just ``comparable'' or ``incomparable.'' Furthermore, marked interval orders can be put in bijection with regions of some familiar hyperplane arrangements. For example, when each interval is only marked at the right endpoint and have length $1$, we obtain semiorders as regions of the arrangement produced by the hyperplanes $x_i - x_j \in \{1, -1\}$ over all $i$ and $j$. When each interval has exactly $n$ equidistant marks, we obtain a similar situation as generalized Shi arrangements, using the hyperplanes $x_i - x_j \in \{-k, -k+1, \ldots, -1, 1, \ldots, k\}$.

Each such family $O$ of marked interval orders corresponds to a region of the hyperplane arrangement $A(O)$ of $\RR^n$ constructed by the following: for each $j$, pick $l_1, l_2, \cdots, l_n$ dependent on the interval lengths and create the hyperplane arrangement from the hyperplanes $x_i - x_j = l_1, l_2, \ldots, l_n$ for all $i$. In general, we are going to get deformations of the braid arrangement. With the tools we have developed in this work, we can generalize Theorem 2.3 from \cite{stanley-arrangements} and the result from Postnikov and Stanley \cite{postnikov-stanley} that labeled semiorders correspond (with multiplicity $n!$) to regions of the Catalan arrangement. The key is, unsurprisingly, that these objects ``avoid\footnote{When all the intervals of a marked interval order $A$ are marked in the same way, as is the case in both of our examples, it is easy to see that $A$ avoids $(3+1)$ as well. We do not prove this claim since it is irrelevant for our proof, but it is a cute curiosity.} $(2+2)$,'' with the proper definition of ``avoid.''

\begin{thm}
Let $O$ be any family of marked interval orders. Labeled members of $O$ correspond to regions of $A(O)$ and unlabeled members of $O$ correspond (with multiplicity $n!$) to regions of $A(O) \cup B$, where $B$ is the Braid arrangement; their EGF $E(x)$ and OGF $O(x)$ satisfy $E(x) = O(1-e^{-x})$.
\end{thm}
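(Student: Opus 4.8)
The statement bundles three claims: that labeled members of $O$ index the regions of $A(O)$, that unlabeled members index the regions of $A(O)\cup B$ with multiplicity $n!$, and that the two generating functions are related by $E(x)=O(1-e^{-x})$. I would prove them in that order, working in the symmetric situation of the two examples where all $n$ intervals are marked in the same way, so that the defining hyperplanes of $A(O)$ are permuted among themselves by the coordinate action of $S_n$ on $\RR^n$; this symmetry is what makes ``labeled'' versus ``unlabeled'' compatible with the geometry. The common thread, and the reason everything works, is the $(2+2)$-avoidance alluded to in the footnote.

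For the first claim I would run the standard region-to-object dictionary for deformations of the braid arrangement. A point $(x_1,\dots,x_n)\in\RR^n$ encodes an embedding of the $n$ labeled intervals (say by left endpoints), and the only datum that defines the marked interval order, the number of marks of interval $i$ lying to the left of the leftmost point of interval $j$, changes precisely when some $x_i-x_j$ crosses one of the prescribed values, that is, precisely across a hyperplane of $A(O)$. Hence two points determine the same marked interval order exactly when they lie in the same region, and a short realizability check produces an embedding for every region. This gives $E(x)=\sum_n r(A(O)_n)\,x^n/n!$, where $r$ denotes the number of regions.

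For the second claim I would exploit the $S_n$-action. Every region of $A(O)\cup B$ lies in a single chamber of the braid arrangement $B$, hence records a strict total order of the coordinates, so a nontrivial coordinate permutation must move it; the action is therefore free and each orbit has size $n!$. It remains to identify the orbits with unlabeled members, for which I would use the fundamental chamber $C_0=\{x_1<\cdots<x_n\}$: sorting coordinates supplies a canonical labeling, so the regions inside $C_0$ surject onto unlabeled objects, and I would argue that this is in fact a bijection, the sorted representative of each unlabeled object being unique exactly because the objects avoid $(2+2)$. By symmetry every braid chamber carries the same number of regions, so $r\bigl((A(O)\cup B)_n\bigr)=n!\cdot(\#\text{ unlabeled members})$, giving $O(x)=\sum_n r\bigl((A(O)\cup B)_n\bigr)x^n/n!$.

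For the last identity I would invoke Theorem~\ref{thm:seeds}. Organizing marked interval orders into a garden, I would take \emph{cloning} to be the duplication of an interval into a new one carrying identical relationships to every other interval, so a clone is \emph{exchangeable} with its source and the seeds are exactly the marked interval orders with no exchangeable pair; repeated decloning then attaches a unique seed to each object. Theorem~\ref{thm:seeds} applies as soon as every seed is \emph{primitive}, and the main obstacle, precisely the ``proper definition of avoid'' promised in the footnote, is to prove the marked analogue of Proposition~\ref{prop:seeds}: given a nontrivial automorphism $g$ of a seed, I would pick an interval $x$ moved by $g$ that is extremal in the natural left-to-right order, use the seed property to find a witness interval $y$ distinguishing $x$ from $g(x)$, and show that $x,g(x),y,g(y)$ realize the forbidden $(2+2)$-type configuration in the marked sense, a contradiction. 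Granting this, all seeds are primitive and Theorem~\ref{thm:seeds} yields $E(x)=O(1-e^{-x})$; together with the two region counts this completes the proof. I expect the genuinely delicate points to be pinning down the marked analogue of a $(2+2)$ (so that it simultaneously drives primitivity and the uniqueness of sorted representatives) and checking the freeness and symmetry hypotheses if one wishes to drop the assumption of identical markings.
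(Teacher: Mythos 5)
Your overall architecture for the generating-function identity matches the paper's: organize marked interval orders into a garden where cloning duplicates an interval with identical relationships to all others, show every seed is primitive, and invoke Theorem~\ref{thm:seeds}. (The paper's proof is in fact devoted entirely to this third claim; the two region correspondences are treated as background from Stanley and Postnikov--Stanley, so your first two paragraphs are supplementary emphasis rather than a divergence.) The gap is in your final step. You propose to rerun the automorphism argument of Proposition~\ref{prop:seeds} inside the marked setting and conclude by exhibiting ``the forbidden $(2+2)$-type configuration in the marked sense, a contradiction'' --- but you never establish that such a configuration is actually forbidden for marked interval orders, and that is precisely the one new fact the theorem requires. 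You flag ``pinning down the marked analogue of a $(2+2)$'' as the delicate point and then leave it unresolved, so the contradiction you want is not available.

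The paper closes this gap with a short forgetful argument that you should adopt instead of re-deriving Proposition~\ref{prop:seeds}: encode each marked interval order $A$ as a weighted directed comparability graph $G(A)$ (an edge $a \to b$ of weight $t$ when $a >_t b$), and observe that collapsing the marks --- replacing each marked interval by an unmarked one so that exactly the same pairs remain comparable --- produces an honest interval order $\phi(A)$ whose comparability graph is $G(A)$ with the weights erased. Since interval orders avoid $(2+2)$, so does $G(A)$, and Proposition~\ref{prop:seeds}, restated for directed acyclic comparability graphs rather than Hasse diagrams, then applies verbatim to give primitivity of all seeds. This is both cleaner and more economical than your plan: it reuses the existing proposition instead of repeating its extremal-element argument, and it isolates the only genuinely new input ($(2+2)$-avoidance of the weighted comparability graph) as an immediate consequence of the unmarked theory. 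Note also that your standing assumption that all intervals are marked identically is not needed for this part; the identity $E(x)=O(1-e^{-x})$ requires only $(2+2)$-avoidance, which the forgetful map delivers for an arbitrary family of marked interval orders.
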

\begin{proof}
Throughout this whole proof, we use slightly stronger versions of the definitions of \emph{gardens}, \emph{seeds}, \emph{$(2+2)$-avoidance}, etc. that apply to directed acyclic graphs and not just posets. This generalization is trivial and left to the reader; simply represent each poset $P$ by its \emph{directed compatibility graph}, a directed acyclic graph where we have an edge $x \rightarrow y$ exactly when $x > y$ in $P$. Similarly, the analogue of $(2+2)$ is the directed graph on $4$ vertices with two disjoint edges. It is easy to check that for a poset, its directed compatibility graph avoids $(2+2)$ if and only if its Hasse diagram does.

Marked interval orders can be seen as posets enriched with ``nuanced'' comparability. To be precise, two of these marked intervals $a$ and $b$, count the number of marked points (on both intervals) between the left-most point $r$ in $a$ and the leftmost point $s$ in $b$. If this is $0$, then call the two  intervals \emph{incomparable.} If this is some positive number $t$, then say $a <_t b$ if $r < s$ and $b <_t a$ otherwise. This is a natural generalization of semiorders, where we just have one $x_i = 1$, and our notion of $a <_1 b$ corresponds exactly to our old notion of $a < b$. Thus, for each marked interval order $A$ we can construct an acyclic directed weighted graph $G(A)$, by assigning to each pair $a$ and $b$ of intervals a directed edge $a \to b$ with weight $t$ if $a >_t b$ and no edge if they are incomparable. Note this is not quite a generalization of the Hasse diagram; rather it is a generalization of the directed comparability graph. We could have developed the entire theory of this work with directed comparability graphs instead of the Hasse diagram; we chose not to do so as they capture the same information but the Hasse diagram is much more compact.

The key insight is that when seen as a directed graph, any $G(A)$ is basically a Hasse diagram of an interval order except with more data, and thus must avoid $(2+2)$ . Suppose $G(A)$ contains $(2+2)$. Consider the following map $\phi$: take $A$, and replace each marked interval by the leftmost interval. It is routine to check that the resulting marked interval order $\phi(A)$, which is now actually an interval order, has two intervals comparable exactly when they were comparable in $A$ (except the weight goes to $1$). One can also think of this as the forgetful functor in the appropriate category that ``forgets'' the weight assignment. Being an interval order, the result is in fact $(2+2)$-avoiding. Proposition~\ref{prop:seeds} (or rather, a tweaked version for directed acyclic graphs) goes through with no nontrivial changes.
\end{proof}

\nocite{*}
\bibliographystyle{alpha}
\bibliography{graded-posets}
\label{sec:biblio}

\end{document}